\newtheorem{theorem}{Theorem}[section]
\newtheorem{corollary}[theorem]{Corollary}
\newtheorem{lemma}[theorem]{Lemma}
\newtheorem{proposition}[theorem]{Proposition}
\numberwithin{equation}{section}
\allowdisplaybreaks \numberwithin{equation}{section}
\author{Jiayu Li, Xiangrong Zhu}
\address{Jiayu Li, School of Mathematical Sciences, University of Science and Technology of China Hefei 230026 \\ AMSS CAS Beijing 100190, P. R. China}
\email{jiayuli@ustc.edu.cn}
\address{Xiangrong Zhu (the corresponding author), School of Mathematical Sciences, Zhejiang Normal University, Jinhua 321004, P. R. of China.}
\email{zxr@zjnu.cn}
\keywords{Linear Poisson equation, Zygmund class, Isoperimetric inequality, Harnack's inequality, Hardy space }
\thanks {The first author is supported by National Key R$\&$D Program of China 2022YFA1005400 and NFSC No.12031017. The second author is supported by the National Key R$\&$D Program of China 2023YFA1010800.}
\begin{document}

\title[Linear Poisson equations with potential on Riemann surfaces]
{Linear Poisson Equations with Potential on Riemann Surfaces}

\begin{abstract}
We study interior estimates for solutions of the linear Poisson equation:
$$
\triangle u = g u + f
$$
where $g$ and $f$ belong to the Zygmund space $L\ln L$ on a Riemann surface $M$ satisfying the isoperimetric inequality.
As applications, we derive corresponding interior estimates, Harnack inequalities, and a global estimate.
\end{abstract}

\maketitle

{\bf Mathematics Subject Classification (2020):} 35J15 (primary), 58J10 (secondary).

\section{Introduction}

 Let $\Omega $ be a bounded domain in $\mathbf{R}^2$. Using Moser's iteration, one can derive the Harnack inequality for a linear uniformly elliptic equation
 $$\operatorname{div} (A\nabla u)=gu+f~~{\rm in}~~\Omega$$
 in the case that $g,f\in L^q(\Omega)$ for some $q>1$. Here, no regularity assumption is made on the coefficients. For details, see \cite{M1} or \cite{Trudinger}.

 Let $M$ be a Riemann surface. We say that $M$ satisfies the isoperimetric inequality if there exists $A>0$ such that
 \begin{align}
 V(\Omega)\leq A l(\partial\Omega)^2\label{sla1.2}
 \end{align}
 for any open domain $\Omega\subset M$ with rectifiable boundary.

The isoperimetric inequality implies the Sobolev inequality (see \cite{S-Y}):
\begin{align}
  \|u\|_{L^q(\Omega)}\leq \frac{q\sqrt{A}}{2}V(\Omega)^{\frac{1}{q}}\|\nabla u\|_{L^2(\Omega)} \label{sobolev1.2}
\end{align}
for all $u \in H^1_0(\Omega)$, $q\geq 1$ and all smooth domains $\Omega \subset M$.

 One can check that the Harnack inequality in \cite{M1} can be generalized to a Riemann surface satisfying the isoperimetric inequality.

 It is natural to ask whether we can apply Moser's iteration to $g,f$ in the Hardy space $\mathscr{H}^1$ or the Zygmund class $L\ln L$ to derive the corresponding Harnack inequality.
 To the best of our knowledge, we have not found any result in this direction.

 Take a smooth function $\eta\in C^{\infty}_{c}(B_2)$ with $0\leq \eta\leq 1$ and $\eta\equiv 1$ on $B_1$.
 For $k>10$, set $y_k=(\frac 4k,\frac 4k)$ and
 \begin{align*}
 f_k(x)=k^2\eta(k(x-y_k))-k^2\eta(k(x+y_k)).
 \end{align*}
  By the theory of $\mathscr{H}^1$-atoms (see \cite{CW2}), we know that $\frac{f_k}{36\pi}$ is a standard $\mathscr{H}^1$-atom in $B_{\frac 6k}$ and then
   $$\|f_k\|_{\mathscr{H}^1(\mathbb{R}^2)}\leq 36\pi.$$
  Set $A_k(x)=1+\eta(\frac{k(x+y_k)}{2})$ and let $u_k$ be the solution to $A_k\triangle u_k(x)=f_k(x)$ with $u_k(\infty)=0$.
  Then from the choice of $\eta$, it is easy to see that
  $$\triangle u_k(x)=\frac{f_k(x)}{A_k(x)}=k^2\eta(k(x-y_k))-\frac{k^2}{2}\eta(k(x+y_k)),u_k(\infty)=0.$$
  Now, straightforward computations yield
 \begin{align*}
 |u_k(0)|
 =&\left|\int_{\mathbb{R}^2}\ln |y|[k^2\eta(k(y-y_k))-\frac{k^2}{2}\eta(k(y+y_k))]dy\right|\\
 =&\frac{k^2}{2}\int_{\mathbb{R}^2}\ln \frac{1}{|y|}\eta(k(y-y_k))dy\\
 \geq &\frac{k^2}{2}\int_{|y-y_k|<\frac 1k}\ln \frac{1}{|y|}dy\\
 \geq &\frac{\pi}{2}\ln \frac{k}{5}.
 \end{align*}
 Therefore, for a general linear uniformly elliptic equation with the nonhomogeneous term $f$ in the Hardy space, we cannot expect to obtain an interior estimate if no regularity is assumed on the coefficients.
 However, it is known (c.f. \cite{Helein}) that if $u$ is a solution of $\triangle u=f$ with $f$ in $\mathscr{H}^1$, then $u$ is continuous and uniformly bounded.

 Here we consider the following Poisson equation with potential
\begin{equation}\label{basic eqn}
 \triangle u=gu+f
 \end{equation}
 on a Riemann surface $M$.

 Before presenting the main theorems, we recall some well-known function spaces.

 When the measure of $X$ is finite, the Zygmund class $L\ln L(X)$ consists of all functions $f$ for which
 $$\|f\|_{L\ln L(X)}=\int_{X} |f(x)|\max\{0,\ln |f(x)|\}dx<\infty.$$
 $L\ln L(X)$ is a natural generalization of $L^q(X)$ for $q>1$.

 Note that $\|\cdot\|_{L\ln L(X)}$ is not a norm. So, we need a suitable norm in $L\ln L(X)$.
 Let $f^*$ be the non-increasing rearrangement of $f$ defined by
 $$f^*(t)=\inf\{s\geq 0:|\{x:|f(x)|>s\}|\leq t\}$$
  and
 \begin{align*}
 \|f\|^*_{L\ln L(X)}=\int^\infty_0 f^*(t)\ln \frac {|X|}{t}dt.
 \end{align*}
 By \cite[Theorem 6.30]{CR16}, we know that $\|\cdot\|^*_{L\ln L}$ is a norm on $L\ln L(X)$. It is easy to see that $\|f\|^*_{L\ln L(X_1)}\leq \|f\|^*_{L\ln L(X)}$ if $X_1\subset X$.

 Let $\Phi$ be a function in the Schwartz space $S(\mathbb{R}^n)$ satisfying $\int_{\mathbb{R}^n}\Phi(x)dx=1$ and set $\Phi_t(x)=\frac{1}{t^n}\Phi(\frac{x}{t})$.
 Following Stein \cite[p. 91]{S93}, we can define the Hardy space $\mathscr{H}^1$ as the space of all tempered distributions $f$ satisfying
 $$\|f\|_{\mathscr{H}^1}=\|\sup\limits_{t>0}|f\ast\Phi_t|\|_{L^1}<\infty.$$

 Let $B_r(x)$ be the geodesic disk in $M$ with centre $x$ and radius $r$. The bounded mean oscillation space $BMO(M)$ consists of all functions $f$ for which
 \begin{align*}
 \|f\|_{BMO}=\sup_{B_r(x)\subset M}\frac{1}{V(B_r(x))}\int_{B_r(x)}|f(y)-\frac{\int_{B_r(x)}f(z)dz}{V(B_r(x))}|dy<\infty.
 \end{align*}

 Let $K_M$ be the Gauss curvature of $M$. Sometimes we assume that there exist $p>1$ and $A>0$ which depend only on $M$ such that
  \begin{align}
 \sup\limits_{x\in M}\|K_M\|_{L^p(B_1(x))}\leq A.\label{sla1.1}
 \end{align}
 For brevity, we have used the same letter $A$ in this expression as in \eqref{sla1.2} to denote a positive constant. Indeed, provided
 $A$ is chosen sufficiently large, we can always replace the constants in both \eqref{sla1.2} and \eqref{sla1.1} with a single universal constant $A$.

 For the sake of convenience, we assume from now on that $u,f,g$ are smooth in the relevant domains. Our main result is the following interior estimate.
 \begin{theorem}\label{main1}
 Suppose that $M$ satisfies (\ref{sla1.2}) and \eqref{sla1.1}. If $B_1$ is a unit geodesic ball in $M$, $f,g\in L\ln L(B_1)$ and $u$ is a solution of (\ref{basic eqn}),
 then
 $$\|u\|_{C(B_{\frac 12})}\leq C(\|u\|_{L^1(B_1)}+\|f\|^*_{L\ln L(B_1)}),$$
 where $C$ depends only on $A,p$ and the structure of $g$.
 \end{theorem}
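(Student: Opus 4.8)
The plan is to reduce everything to the flat disc and then run a two–stage interior argument: a \emph{truncated} Moser iteration producing $L^{Q_0}$ control for one fixed, large but finite $Q_0$, followed by a representation–formula step that upgrades $L^{Q_0}$ to $L^\infty$. First I would use the $L^p$ curvature bound \eqref{sla1.1} together with the isoperimetric inequality \eqref{sla1.2} to produce, on a fixed fraction of $B_1$, isothermal coordinates in which the metric is $e^{2\phi}(dx^2+dy^2)$ with $\|\phi\|_{C^0}\le C(A,p)$; since $\triangle_g=e^{-2\phi}\triangle_0$ in two dimensions, the equation becomes $\triangle_0 u=\widehat g\,u+\widehat f$ on a Euclidean ball, with $\widehat g=e^{2\phi}g$, $\widehat f=e^{2\phi}f$, and all relevant quantities ($L^1$, $\|\cdot\|^*_{L\ln L}$, volumes, balls) comparable to the intrinsic ones. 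In that chart the isoperimetric inequality also yields the Moser--Trudinger inequality $\int_\Omega \exp\!\big(c|v|^2/\|\nabla v\|_{L^2(\Omega)}^2\big)\,dx\le C\,|\Omega|$ for $v\in H^1_0(\Omega)$, $\Omega\subset B_1$, equivalently $\|v\|_{\exp L^2(\Omega)}\le C(A)\|\nabla v\|_{L^2(\Omega)}$. Next, setting $\bar u=|u|+k$ with $k=\varepsilon_1^{-1}\|f\|^*_{L\ln L(B_1)}$ for a universal $\varepsilon_1$, Kato's inequality gives $\triangle_0\bar u+V\bar u\ge 0$ with $0\le V=2|\widehat g|+|\widehat f|/k$ and $\|V\|^*_{L\ln L(B_1)}\le 2\|\widehat g\|^*_{L\ln L(B_1)}+\varepsilon_1$; since $\|\bar u\|_{L^1(B_1)}=\|u\|_{L^1(B_1)}+\pi k$, it suffices to prove $\sup_{B_{1/2}}\bar u\le C(g)\|\bar u\|_{L^1(B_1)}$ for the positive subsolution $\bar u$.

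The main obstacle is exactly that $L\ln L$ is the \emph{critical} (scale‑invariant, non‑improvable) class for $\triangle_0$ in dimension two, so the potential term \emph{cannot} be absorbed in a single Moser pass: testing with $\chi^2\bar u^{2\beta+1}$ produces a factor $\sim(\beta+1)$ in front of $\int\chi^2 V w^2$ ($w=\bar u^{\beta+1}$), and Moser--Trudinger only gives $\int V(\chi w)^2\le C\|V\|^*_{L\ln L}\|\nabla(\chi w)\|_{L^2}^2$ with a \emph{fixed} constant, so absorption into the gradient requires $\|V\|^*_{L\ln L}\lesssim(\beta+1)^{-1}$, which fails for $\beta$ large on a fixed ball. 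To get around this I would split $V=b+v$ with $b=V\mathbf 1_{\{V\le N\}}$ (so $\|b\|_{\infty}\le N$) and $v=V\mathbf 1_{\{V>N\}}$, choosing $N=N(g)$ large enough that $\|v\|^*_{L\ln L(B_1)}\le\varepsilon_0$ for a \emph{universal} small $\varepsilon_0$ (possible since $V\in L\ln L$); $\varepsilon_0$ and $\varepsilon_1$ are fixed once and for all, while $N$—hence the final constant—depends on $g$, which is what ``the structure of $g$'' means.

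\emph{Stage 1 (truncated Moser iteration).} Testing $\triangle_0\bar u+(b+v)\bar u\ge0$ with $\chi^2\bar u^{2\beta+1}$, the bounded part contributes $2(\beta+1)N\|\chi w\|_{L^2}^2$ (no gradient), while the small part contributes $2(\beta+1)\|v\|^*_{L\ln L}\|\chi w\|_{\exp L^2}^2\le 2(\beta+1)\varepsilon_0 C\|\nabla(\chi w)\|_{L^2}^2$, which is absorbable provided $\beta+1\le c/\varepsilon_0=:B_0$. Iterating $\beta+1=\gamma^0,\gamma^1,\dots$ only up to $B_0$ (a universal number), and combining with the standard ``$L^p$ for small $p$'' interpolation--iteration to replace $\|\bar u\|_{L^2}$ by $\|\bar u\|_{L^1}$, yields
$$\|\bar u\|_{L^{Q_0}(B_{3/4})}\le C(N)\,\|\bar u\|_{L^1(B_1)}$$
for a universal $Q_0>1$, with $C(N)$ polynomial in $N$; the partial product $\prod_k C_k^{1/p_k}$ converges because only finitely many, polynomially growing factors occur.

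\emph{Stage 2 ($L^{Q_0}\to L^\infty$) and absorption.} For $x_0\in B_{1/2}$ and small $\delta$, the Green representation on $B_\delta(x_0)$ together with $G_\delta(x_0,y)=\tfrac1{2\pi}\ln\frac{\delta}{|x_0-y|}\ge0$ and $\triangle_0\bar u\ge -(b+v)\bar u$ gives, after averaging in the radius,
$$\bar u(x_0)\le \frac{2}{\pi\delta^2}\|\bar u\|_{L^1(B_\delta(x_0))}+\int_{B_\delta(x_0)}G_\delta(x_0,y)\,b(y)\bar u(y)\,dy+\int_{B_\delta(x_0)}G_\delta(x_0,y)\,v(y)\bar u(y)\,dy .$$
The $b$–term is estimated by ordinary Hölder, $\le N\,\|G_\delta(x_0,\cdot)\|_{L^{Q_0'}(B_\delta)}\|\bar u\|_{L^{Q_0}(B_\delta(x_0))}\le CN\delta^{2/Q_0'}\|\bar u\|_{L^{Q_0}(B_{3/4})}$, which is finite precisely because Stage 1 gave $\bar u\in L^{Q_0}$ (note $L\ln L\not\subset L^p$, $p>1$, so here the boundedness of $b$ is essential). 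The $v$–term is estimated by the $L\ln L$–$\exp L$ duality, $\le \|\bar u\|_{L^\infty(B_\delta(x_0))}\,C\|v\|^*_{L\ln L(B_\delta(x_0))}\le C\varepsilon_0\|\bar u\|_{L^\infty(B_\delta(x_0))}$, where the key point is that the Orlicz norm $\|\ln(\delta/|x_0-\cdot|)\|_{\exp L(B_\delta(x_0))}$ is \emph{scale invariant} (it reduces, under $y=x_0+\delta z$, to the fixed norm of $\ln(1/|z|)$ on $B_1$), so the constant is \emph{universal} regardless of $\delta$. Choosing $\varepsilon_0$ universal with $C\varepsilon_0<\tfrac14$, and iterating this pointwise bound over dyadic radii $r_k\nearrow\tfrac58$, $\delta_k=r_{k+1}-r_k\sim 2^{-k}$, the coefficient ratio $2^{2}\cdot C\varepsilon_0<1$ makes the series convergent and the tail $\lesssim 2^{-2k}\|\bar u\|_{L^\infty(B_{5/8})}\to0$ (finite since $u$ is smooth on $B_1$). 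This yields $\sup_{B_{1/2}}\bar u\le C(N)\big(\|\bar u\|_{L^1(B_1)}+\|\bar u\|_{L^{Q_0}(B_{3/4})}\big)\le C(g)\|\bar u\|_{L^1(B_1)}$, and unwinding the reductions gives the claimed estimate. The delicate points to be carried out carefully are: the uniform bound on the conformal factor and the passage of the $L\ln L$ and $\exp L$ norms through the change of chart; the precise bookkeeping of which constants are universal ($\varepsilon_0,\varepsilon_1,Q_0$) versus $g$–dependent ($N$, hence $C(g)$); and the scale invariance of the Orlicz pairing that makes Stage 2 close.
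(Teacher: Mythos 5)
Your proposal is essentially sound in the Euclidean setting (i.e., it would prove Proposition \ref{main3}), but the step that carries all of the geometric content of Theorem \ref{main1} --- the reduction to a flat chart --- is asserted rather than proved, and it is the one place where something could genuinely fail. Producing isothermal coordinates on a definite fraction of $B_1$ with $\|\phi\|_{C^0}\le C(A,p)$ from \eqref{sla1.2} and \eqref{sla1.1} alone is a uniformization statement with uniform estimates under only an $L^p$ curvature bound and volume non-collapsing. One can try to solve $\triangle_g w=K_g$ with zero boundary data and conformally flatten, and Moser iteration with \eqref{sobolev1.2} does bound $\|w\|_{L^\infty}$; but to turn the resulting flat metric into an actual global chart on an interior ball you need that ball to be simply connected and the developing map injective, and neither hypothesis of the theorem gives you an injectivity-radius or topological control for free. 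You also need to push $\|\cdot\|^*_{L\ln L}$ and the $\exp L$ pairing through the change of measure. None of this is impossible, but it is a substantial piece of work that you have compressed into one sentence, and it is precisely what the paper is organized to avoid: the authors never leave the surface, working in geodesic polar coordinates and controlling the defect from the mean-value property by the single estimate \eqref{sl2.1} on $\int\!\int|\partial_r(G(r,\theta)/l(\partial B_r))|$, which uses only $K_M\in L^p$ and Lemmas \ref{low}--\ref{up}.

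Granting the chart, your argument is correct but takes a genuinely different and heavier route than the paper's, and your Stage 1 is actually superfluous. The paper's proof contains no Moser iteration at all: it writes $u(x_0)$ minus its circle average as an integral of $\triangle u$ against the log-type kernel $\int_{d(x,x_0)}^{\rho}dr/l(\partial B_r)$, bounds the pairing of this kernel with $gu+f$ by Lemma \ref{slnL} (the same rearrangement/$\exp L$ duality you invoke), and makes the coefficient of $\|u\|_{C(B_\rho)}$ small not by truncating $g$ in value but by shrinking the radius $\rho_0$ using absolute continuity of $\|g\|^*_{L\ln L(B_\rho(x_0))}$; the final absorption iteration over $t_k=(1-2^{-k})\rho_0$ is the exact analogue of your dyadic Stage 2 iteration. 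In your scheme the same shortcut is available: in the Green representation the bounded part satisfies $\int G_\delta\, b\,\bar u\le N\|G_\delta\|_{L^1}\|\bar u\|_{L^\infty(B_\delta)}\le CN\delta^2\|\bar u\|_{L^\infty(B_\delta)}$, which is absorbed for $\delta$ small depending on $N$, so the truncated Moser iteration and the $L^{Q_0}$ bound are never needed. What your decomposition $V=b+v$ buys is a clean separation of the universal constants ($\varepsilon_0,\varepsilon_1$) from the $g$-dependent ones ($N$), and your diagnosis of why a single Moser pass fails in the critical class $L\ln L$ is exactly right; but the paper reaches the same conclusion with only the divergence theorem, one rearrangement inequality, and one geometric estimate.
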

 \textbf{Remark 1.} One can check that we can replace $f\in L\ln L(B_1)$ by $f\in \mathscr{H}^1_{at}(B_1)$ in this theorem where $\mathscr{H}^1_{at}(B_1)$ is the atomic Hardy space
 defined in \cite{CW2}. If $M=\mathbb{R}^2$, then the atomic Hardy space is the usual Hardy space.

 As special cases of Theorem \ref{main1}, we can obtain the following theorems, which are interesting and new to us.
 \begin{proposition}\label{main2}
 Suppose that the surface $M$ satisfies (\ref{sla1.2}) and \eqref{sla1.1}. If $B_1$ is a unit geodesic ball in $M$, $f\in L\ln L(B_1)$ and
 $$\triangle u(x)=f(x), \quad x\in B_1,$$
 then
$$\|u\|_{C(B_{\frac 12})}\leq C(\|u\|_{L^1(B_1)}+\|f\|^*_{L\ln L(B_1)}),$$
 where $C$ depends only on $A,p$.
 \end{proposition}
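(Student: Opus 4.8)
The statement is the $g\equiv0$ case of Theorem \ref{main1}, so once Theorem \ref{main1} is established it is immediate; it is worth recording separately because it admits the following short, self-contained proof, which also isolates the mechanism behind the general case. The plan is to reduce to a Euclidean ball by a conformal change of coordinates, split $u$ into a Green potential and a harmonic part, and estimate each separately.

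First I would produce isothermal coordinates. Solving the curvature equation $\triangle_0\varphi=-K_Me^{2\varphi}$ in a fixed coordinate disk and using \eqref{sla1.1} gives coordinates on a fixed fraction of $B_1$ in which the metric is $e^{2\varphi}(dx_1^2+dx_2^2)$ with $\varphi$ continuous and $\|\varphi\|_{C^0}$ bounded by a constant depending only on $A,p$; in particular $\Lambda^{-1}\le e^{2\varphi}\le\Lambda$ and geodesic balls are comparable to Euclidean balls. Since $\triangle_M u=e^{-2\varphi}\triangle_0 u$, the equation $\triangle_M u=f$ becomes $\triangle_0 u=\tilde f$ with $\tilde f=e^{2\varphi}f$, and the two‑sided bound on $e^{2\varphi}$ gives $\|\tilde f\|^*_{L\ln L}\le C\|f\|^*_{L\ln L}$ together with comparability of the $L^1$ norms, so it suffices to prove the estimate for $\triangle_0 u=\tilde f$ on a Euclidean ball.

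On $B_{3/4}$ I would write $u=w+h$, where $w(x)=\int_{B_{3/4}}G(x,y)\tilde f(y)\,dy$ is the Green potential of $\tilde f$ with zero Dirichlet data and $h=u-w$ is harmonic on $B_{3/4}$ (harmonicity being conformally invariant in dimension two). For $w$ I would use $0\le -G(x,y)\le\frac1{2\pi}\ln\frac{C}{|x-y|}$ and the Hardy–Littlewood rearrangement inequality: for fixed $x$ the distribution function of $y\mapsto\ln\frac{C}{|x-y|}$ over $B_{3/4}$ decays like $e^{-2s}$, so its decreasing rearrangement is $\le\frac12\ln\frac1t+C$, whence
$$
|w(x)|\le\int_0^{|B_{3/4}|}\Big(\tfrac12\ln\tfrac1t+C\Big)\tilde f^*(t)\,dt\le C\,\|\tilde f\|^*_{L\ln L(B_1)},
$$
where at the last step one uses $\ln\frac1t\le\ln\frac{|B_1|}{t}+C$ (since $|B_1|$ is bounded above and below by $A,p$) and the elementary inequality $\|\tilde f\|_{L^1}\le\|\tilde f\|^*_{L\ln L}$, which follows from $\int_0^{|X|}(\ln\frac{|X|}{t}-1)\,dt=0$ and monotonicity of $\tilde f^*$. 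For $h$, given $x\in B_{1/2}$ one has $B_{1/8}(x)\subset B_{3/4}$, so the mean value property gives $|h(x)|\le C\|h\|_{L^1(B_{3/4})}\le C\big(\|u\|_{L^1(B_{3/4})}+\|w\|_{L^1(B_{3/4})}\big)\le C\big(\|u\|_{L^1(B_1)}+\|\tilde f\|^*_{L\ln L(B_1)}\big)$. Adding the two bounds and undoing the conformal change yields the claim, with $C=C(A,p)$.

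The main obstacle is the first step: extracting from the single integral curvature bound \eqref{sla1.1} isothermal coordinates whose conformal factor is bounded above and below by constants depending only on $A$ and $p$ (equivalently, the two‑sided logarithmic bound on the Green's function of a geodesic ball). The rearrangement estimate in the third step is the conceptual heart — it is precisely where $L\ln L$ enters, being essentially in duality with the exponential class containing $\ln\frac1{|x-y|}$ — but it is technically routine, and the harmonic estimate is classical.
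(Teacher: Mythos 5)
Your route is genuinely different from the paper's: the paper obtains Proposition \ref{main2} as the $g\equiv 0$ case of Theorem \ref{main1}, working directly in semi-geodesic polar coordinates, comparing $u(x_0)$ with its circle averages via the divergence theorem and Lemma \ref{slnL}, and closing with an iteration; it never constructs a conformal chart. Your Euclidean core --- the decomposition into a Green potential plus a harmonic part, the rearrangement bound $k^*(t)\le\frac12\ln\frac1t+C$ for the logarithmic kernel, the observation $\|\tilde f\|_{L^1}\le\|\tilde f\|^*_{L\ln L}$, and the mean value estimate for $h$ --- is correct and is essentially the flat-space content of Lemma \ref{slnL}.

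The gap is the first step, which you flag as ``the main obstacle'' but then treat as available. The claim that \eqref{sla1.2} and \eqref{sla1.1} yield isothermal coordinates on a definite fraction of $B_1$ with $\|\varphi\|_{C^0}\le C(A,p)$ is not a routine consequence of ``solving $\triangle_0\varphi=-K_Me^{2\varphi}$.'' That equation is nonlinear and circular at exactly the point that matters: to conclude that $\varphi$ is bounded from $K_M\in L^p$ you need $K_Me^{2\varphi}$ to be $L^p$ with respect to the Euclidean measure of the chart, i.e., you already need the two-sided bound on $e^{2\varphi}$ that you are trying to prove. For small $\|K_M\|_{L^p}$ this can be closed by a continuity or fixed-point argument, but \eqref{sla1.1} permits $A$ arbitrarily large, where concentration of curvature can degenerate the conformal factor (the same phenomenon as bubbling in the prescribed-curvature problem); ruling this out using the isoperimetric inequality is a substantial piece of the theory of surfaces with integral curvature bounds, not a one-line reduction, and the constant must be tracked through it to depend only on $A$ and $p$. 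Until that chart is actually constructed, the proof is incomplete. This is precisely the difficulty the paper's argument is built to avoid: it uses \eqref{sla1.1} only through the elementary bounds $\|K_M\|_{L^1(B_r)}\le V(B_r)^{1-1/p}\|K_M\|_{L^p}$ and the estimate \eqref{sl2.1} on $\int_0^{\rho}\int_0^{2\pi}|\partial_r(G/l)|\,d\theta\,dr$, which require no conformal chart at all.
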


 \begin{proposition}\label{main3}
 Let $B_1$ be the unit disk in $\mathbb{R}^2$, $f,g\in L\ln L(B_1)$ and
 $$\triangle u(x)=g(x)u(x)+f(x), \quad x\in B_1.$$
 Then
 $$\|u\|_{C(B_{\frac 12})}\leq C(\|u\|_{L^1(B_1)}+\|f\|^*_{L\ln L(B_1)}),$$
 where $C$ depends only on the structure of $g$.
 \end{proposition}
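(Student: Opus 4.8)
Since $\mathbb R^2$ satisfies both \eqref{sla1.2} (with $A=\tfrac1{4\pi}$) and \eqref{sla1.1} (the Gauss curvature vanishes identically, so the left side is $0$), Proposition \ref{main3} is exactly the case $M=\mathbb R^2$, $B_1$ the Euclidean unit disk, of Theorem \ref{main1}, and in particular follows from it. Still, in this flat setting the two mechanisms behind the proof stand out cleanly, so let me sketch the direct route. The first mechanism is the endpoint $L^\infty$ bound for the Poisson equation with $L\ln L$ right-hand side: if $h\in L\ln L(B_r)$ and $v$ solves $\triangle v=h$ in $B_r$ with $v=0$ on $\partial B_r$, then $v(x)=\int_{B_r}G_{B_r}(x,y)h(y)\,dy$ with $0\le G_{B_r}(x,y)\le C+\tfrac1{2\pi}\ln\tfrac{2r}{|x-y|}$; since $y\mapsto G_{B_r}(x,y)$ lies in the Orlicz space $\exp L(B_r)$ with norm bounded uniformly in $x$ and in $r\le1$ (by scale invariance of the Dirichlet Green's function of a planar disk), the generalized Hölder inequality for the $L\ln L$–$\exp L$ duality yields
\begin{align*}
\|v\|_{L^\infty(B_r)}\le C\,\|h\|^*_{L\ln L(B_r)}.
\end{align*}
This is the sharp integrability threshold: the example given in the introduction shows that $L^1$ data would not suffice. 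The second mechanism is that $g\in L\ln L(B_1)$ has small local norm at small scales, i.e.\ $\omega_g(\rho):=\sup\{\|g\|^*_{L\ln L(B_\rho(x))}:B_\rho(x)\subset B_1\}\to0$ as $\rho\to0$; a modulus controlling this decay is what ``the structure of $g$'' refers to.

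Fix a small universal constant $\varepsilon_0>0$ (chosen below) and pick $r_0\in(0,\tfrac18)$ with $\omega_g(r_0)\le\varepsilon_0$. For $B_r(x_0)\subset B_1$ with $r\le r_0$, split $u=v+w$ on $B_r(x_0)$, where $\triangle v=gu+f$ with $v=0$ on $\partial B_r(x_0)$ and $w=u-v$ is harmonic there. Since the non-increasing rearrangement obeys $(gu)^*\le\|u\|_{L^\infty(B_r(x_0))}\,g^*$, we get $\|gu+f\|^*_{L\ln L(B_r(x_0))}\le\varepsilon_0\|u\|_{L^\infty(B_r(x_0))}+\|f\|^*_{L\ln L(B_1)}$, hence by the first mechanism $\|v\|_{L^\infty(B_r(x_0))}\le C_0\varepsilon_0\|u\|_{L^\infty(B_r(x_0))}+C\|f\|^*_{L\ln L(B_1)}$. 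For $w$, the interior mean value estimate gives $\|w\|_{L^\infty(B_{r/2}(x_0))}\le Cr^{-2}\|w\|_{L^1(B_r(x_0))}\le Cr^{-2}\|u\|_{L^1(B_1)}+C\|v\|_{L^\infty(B_r(x_0))}$. Adding the two bounds, taking the supremum over $x_0\in B_\sigma$, and putting $\tau=\sigma+r$, we obtain for all $\tfrac12\le\sigma<\tau\le1$ with $\tau-\sigma\le r_0$
\begin{align*}
\phi(\sigma)\le C_0\varepsilon_0\,\phi(\tau)+\frac{C}{(\tau-\sigma)^2}\|u\|_{L^1(B_1)}+C\|f\|^*_{L\ln L(B_1)},\qquad\phi(t):=\|u\|_{L^\infty(B_t)},
\end{align*}
with $C_0,C$ universal (and $C_0$ allowed to grow between lines); $\phi$ is finite because $u$ is smooth (to be safe, one works first on $[\tfrac12,1-\delta]$ and lets $\delta\to0$ at the end).

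Now fix $\varepsilon_0$ so small that $C_0\varepsilon_0<\tfrac12$, and iterate along the geometric sequence $\tau_0=\tfrac12$, $\tau_{k+1}=\tau_k+(1-\mu)\mu^k\cdot\tfrac12$ with $\mu=1-2r_0$: every increment is $\le r_0$, and since $\mu>\tfrac34$ also $C_0\varepsilon_0\,\mu^{-2}<1$. Weighting the inequalities by $(C_0\varepsilon_0)^k$ and summing, the telescoped tail $(C_0\varepsilon_0)^K\phi(\tau_K)\le(C_0\varepsilon_0)^K\phi(1)\to0$ (so the qualitative finiteness of $\phi$ is used only to discard this term), while the geometric series in the forcing terms converges, giving $\|u\|_{C(B_{1/2})}=\phi(\tfrac12)\le C\,r_0^{-2}\|u\|_{L^1(B_1)}+C\|f\|^*_{L\ln L(B_1)}$. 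As $r_0$ depends only on the structure of $g$ and the remaining constants are universal, this is the assertion.

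The main obstacle is the potential term $gu$: a priori it belongs only to $L^1$, which is below the threshold at which an $L^\infty$ bound for the Poisson equation is available, so one cannot control $\|u\|_{C(B_{1/2})}$ in a single step. The point of the argument is that passing to small scales turns the bad term into a contraction — on balls of radius $\le r_0$ the coefficient in front of $\|u\|_{L^\infty}$ is the small number $C_0\varepsilon_0$ — which the iteration then absorbs; the only genuinely delicate bookkeeping is checking that $\varepsilon_0$, $r_0$ and $\mu$ can be fixed consistently and that the smoothness of $u$ does not leak into the final constant.
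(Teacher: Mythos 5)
Your proposal is correct, and its first sentence is precisely the paper's proof of Proposition \ref{main3}: the Euclidean plane satisfies \eqref{sla1.2} and \eqref{sla1.1} trivially, so the statement is the case $M=\mathbb{R}^2$ of Theorem \ref{main1}, and the paper offers nothing beyond this reduction. The direct argument you then sketch is a legitimate alternative whose two mechanisms are the flat-case shadows of the ingredients in the paper's proof of Theorem \ref{main1}: your $L\ln L$--$\exp L$ duality for the Green kernel of the disk is Lemma \ref{slnL} (the kernel $\int_{|x-x_0|}^{\rho}\frac{dr}{l(\partial B_r)}$ reduces to $\frac{1}{2\pi}\ln\frac{\rho}{|x-x_0|}$ in the plane), and your smallness of $\omega_g(\rho)$ is exactly the paper's choice of $\rho_0$ by absolute continuity so that $C(\|g\|^*_{L\ln L(B_{\rho_0}(x_0))}+\rho_0^{2-2/p})<\frac18$. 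Where you differ is in the mechanics: the paper works with the derivative of the normalized spherical average in semi-geodesic coordinates (which is what survives on a general surface, at the cost of the curvature error term \eqref{sl2.1}), and absorbs $\|u\|_{C(B_{r+t})}$ via the iteration $t_k=(1-2^{-k})\rho_0$; you instead use a Dirichlet solution plus harmonic replacement on each small ball and a geometric covering sequence $\tau_{k+1}=\tau_k+(1-\mu)\mu^k/2$. Your version is cleaner and more self-contained in the flat case (and your remark about first working on $B_{1-\delta}$ correctly patches the only qualitative gap, the a priori finiteness of $\phi$ near $\partial B_1$), but it does not generalize to the surface setting without reconstructing the Green's function estimates that the paper obtains from \eqref{sla1.2}--\eqref{sla1.1}; the paper's route buys that generality.
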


\textbf{Remark 2.} In particular, if $g\in L^q(B_1)$ for some $q>1$, then we can take the constant $C$ in Proposition \ref{main3} to be $C_q(1+\|g\|_q)^{q_1}$ for any $q_1>\frac{4q}{q-1}$
 which recovers the known result.

 As applications, we can give a simple proof of the following Harnack inequality.
 \begin{corollary}\label{main4}
  Let $B_1$ be the unit disk in $\mathbb{R}^2$. If $f\in L\ln L(B_1)$, $u\geq 0$ and
 $$\triangle u(x)=f(x), \quad x\in B_1,$$
 then we have the following Harnack inequality:
 $$\max\limits_{x\in B_{\frac 12}}u(x)\leq C(\min\limits_{y\in B_{\frac 12}}u(y)+\|f\|^*_{L\ln L(B_1)}).$$
 \end{corollary}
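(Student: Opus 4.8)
The plan is to decompose $u$ into a Newtonian potential of $f$ and a harmonic remainder, to control the former by Proposition \ref{main2}, and to apply the classical Harnack inequality to the latter. Concretely, let $w$ solve $\triangle w=f$ in $B_1$ --- for instance $w(x)=\frac{1}{2\pi}\int_{B_1}\ln|x-y|\,f(y)\,dy$, or the solution of the Dirichlet problem with zero boundary values --- and set $v:=u-w$, so that $v$ is harmonic in $B_1$.

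The first step is to prove $\|w\|_{C(B_{3/4})}\le C\|f\|^*_{L\ln L(B_1)}=:K$. I would begin with the elementary bound $\|f\|_{L^1(B_1)}\le\|f\|^*_{L\ln L(B_1)}$: since $f^*$ vanishes on $[|B_1|,\infty)$, this reduces to $\int_0^{|B_1|}f^*(t)\bigl(\ln\tfrac{|B_1|}{t}-1\bigr)\,dt\ge 0$, which holds because the weight $\ln\tfrac{|B_1|}{t}-1$ has vanishing integral on $(0,|B_1|)$ and changes sign from positive to negative at $t=|B_1|/e$ while $f^*$ is non-increasing (a Chebyshev-type rearrangement inequality). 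Together with Young's inequality for the logarithmic kernel this gives $\|w\|_{L^1(B_1)}\le C\|f\|_{L^1(B_1)}\le C\|f\|^*_{L\ln L(B_1)}$. Then, covering $B_{3/4}$ by finitely many small balls compactly contained in $B_1$ and applying Proposition \ref{main2} on each of them after rescaling to unit size, I obtain $\|w\|_{C(B_{3/4})}\le C(\|w\|_{L^1(B_1)}+\|f\|^*_{L\ln L(B_1)})\le K$. Here one uses that under $x\mapsto rx$ the $L^1$ norm of the unknown picks up a factor $r^{-2}$, while $\|f\|^*_{L\ln L}$ is scale invariant in dimension two: rescaling turns $f$ into $\widetilde f$ with $\widetilde f^*(t)=r^2 f^*(r^2 t)$ and $|B_r|=\pi r^2$, and a change of variables cancels the factors in $\int_0^\infty \widetilde f^*(t)\ln\frac{|B_1|}{t}\,dt$.

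With this estimate in hand the rest is formal. On $B_{3/4}$ one has $v=u-w\ge-|w|\ge-K$, so $v+K$ is a non-negative harmonic function there, and the classical Harnack inequality (for $B_{1/2}$ compactly inside $B_{3/4}$) gives $\max_{B_{1/2}}(v+K)\le C\min_{B_{1/2}}(v+K)$. Since $w\le K$ on $B_{1/2}$ we have $u\le v+K$, and since $-w\le K$ we have $v+K=u-w+K\le u+2K$; hence
$$\max_{B_{1/2}}u\le\max_{B_{1/2}}(v+K)\le C\min_{B_{1/2}}(v+K)\le C\Bigl(\min_{B_{1/2}}u+2K\Bigr),$$
which is the asserted inequality after renaming constants and recalling $K=C\|f\|^*_{L\ln L(B_1)}$. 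The argument is short precisely because Proposition \ref{main2} already carries the analytic weight; the only points requiring (easy) care are the two-dimensional scaling behaviour of the two norms, needed to localize Proposition \ref{main2}, and the rearrangement inequality $\|f\|_{L^1(B_1)}\le\|f\|^*_{L\ln L(B_1)}$. I do not expect any genuine obstacle here.
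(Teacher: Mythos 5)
Your argument is correct, but it takes a genuinely different route from the paper's. You split $u=v+w$ with $w$ a potential of $f$, control $\|w\|_{C(B_{3/4})}$ by localizing and rescaling Proposition \ref{main2} (using the two-dimensional scale invariance of $\|\cdot\|^*_{L\ln L}$ under $x\mapsto z+rx$, $f\mapsto r^2f(z+r\cdot)$, and the rearrangement inequality $\|f\|_{L^1(B_1)}\le\|f\|^*_{L\ln L(B_1)}$, both of which you verify correctly), and then apply the classical Harnack inequality to the non-negative harmonic function $v+K$ on $B_{3/4}$. The paper instead works with $u$ directly: the Green representation on circles gives $|2\pi u(x_0)-\int_{S^1}u(x_0+\rho\theta)\,d\theta|\le\int_{B_\rho(x_0)}|f(x)|\ln\frac{\rho}{|x-x_0|}\,dx\le C\|f\|^*_{L\ln L(B_1)}$ by the flat case of Lemma \ref{slnL}; integrating in $\rho$ yields an approximate mean-value property $|u(x_0)-\frac{1}{\pi r^2}\int_{B_r(x_0)}u|\le C\|f\|^*_{L\ln L(B_1)}$, and then the non-negativity of $u$ together with a comparison of averages over nested balls ($B_{1/4}(x_2)\subset B_{1/2}(x_1)$) and a short chaining argument give Harnack, with no appeal to the classical harmonic Harnack inequality or to Proposition \ref{main2}. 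The paper's route is more self-contained and transfers verbatim to the $\mathscr{H}^1$ version (Corollary \ref{main5}) by replacing Lemma \ref{slnL} with $\mathscr{H}^1$--$BMO$ duality; your route is more modular, reducing everything to the interior estimate plus classical Harnack, but pays for this with the rescaling bookkeeping needed to localize Proposition \ref{main2}, which is stated only for a unit ball. Both proofs are valid.
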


 Since $f\in L\ln L(B_1)$ implies $f-\frac{\int_{B_1}fdx}{\pi}\in \mathscr{H}^1$ (see \cite{S69}), we can extend Corollary~\ref{main4} as follows:
 \begin{corollary}\label{main5}
  Let $B_1$ be the unit disk in $\mathbb{R}^2$. If $f\in \mathscr{H}^1(B_1)$, $u\geq 0$ and
 $$\triangle u(x)=f(x), x\in B_1,$$
 then we have the following Harnack inequality:
 $$\max\limits_{x\in B_{\frac 12}}u(x)\leq C(\min\limits_{y\in B_{\frac 12}}u(y)+\|f\|_{\mathscr{H}^1(B_1)}).$$
 \end{corollary}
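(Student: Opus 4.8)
The plan is to reduce Corollary \ref{main5} to the classical Harnack inequality for nonnegative harmonic functions by subtracting off a bounded particular solution, the boundedness coming from the regularity theory for the Poisson equation with Hardy-space right-hand side. Note that one cannot simply invoke Corollary \ref{main4}, since $\mathscr{H}^1(B_1)$ is \emph{not} contained in $L\ln L(B_1)$; the argument instead mirrors the proof of Corollary \ref{main4}, with the role of the $L\ln L$-estimate played by the $\mathscr{H}^1$-estimate.

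\textbf{Step 1: a bounded particular solution.} First I would produce a function $w$ on $B_1$ solving $\triangle w=f$ in $B_1$ with $w|_{\partial B_1}=0$ and
\[
\|w\|_{C(\overline{B_1})}\le C\|f\|_{\mathscr{H}^1(B_1)}.
\]
This is exactly the regularity result for $\triangle w=f$ with $f\in\mathscr{H}^1$ (cf. \cite{Helein}, and Proposition \ref{main2} together with Remark 1). Concretely, writing $f=\sum_j\lambda_j a_j$ as a sum of $\mathscr{H}^1$-atoms (see \cite{CW2}) with $\sum_j|\lambda_j|\le C\|f\|_{\mathscr{H}^1(B_1)}$, each $a_j$ supported in a ball $B_j\subset B_1$ with $\|a_j\|_\infty\le|B_j|^{-1}$ and $\int a_j=0$, one estimates the Newtonian potential $Na_j$ of a single atom by splitting the integral according to whether the distance to the centre of $B_j$ is comparable to its radius $r_j$ or much larger, exploiting the mean-zero cancellation in the far region; this yields $\|Na_j\|_{L^\infty}\le C$ \emph{independently of the atom}. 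Summing over $j$ and correcting the boundary values by a harmonic function controlled through the maximum principle gives $w$ as above.

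\textbf{Step 2: harmonic part and classical Harnack.} Next I would set $h:=u-w$, which is harmonic in $B_1$ and satisfies $h\ge -\|w\|_\infty\ge -C\|f\|_{\mathscr{H}^1(B_1)}$. Hence $\tilde h:=h+C\|f\|_{\mathscr{H}^1(B_1)}$ is a nonnegative harmonic function on $B_1$, and the classical Harnack inequality gives an absolute constant $C_0$ with $\sup_{B_{\frac 12}}\tilde h\le C_0\inf_{B_{\frac 12}}\tilde h$. Using $u\le\|w\|_\infty+h\le\tilde h$ and $h\le u+\|w\|_\infty$, one concludes on $B_{\frac 12}$ that
\[
\max_{B_{\frac 12}}u\ \le\ \sup_{B_{\frac 12}}\tilde h\ \le\ C_0\inf_{B_{\frac 12}}\tilde h\ \le\ C_0\Big(\min_{B_{\frac 12}}u+2C\|f\|_{\mathscr{H}^1(B_1)}\Big),
\]
which is the asserted inequality after renaming constants.

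\textbf{Main obstacle.} The hard part will be Step 1: obtaining $\|w\|_{C(\overline{B_1})}\le C\|f\|_{\mathscr{H}^1(B_1)}$ for a suitable particular solution. One genuinely needs the Hardy-space cancellation — equivalently, the uniform $L^\infty$ bound for Newtonian potentials of atoms — together with some care about the precise meaning of the localized space $\mathscr{H}^1(B_1)$ and the passage from the global potential to a solution on the ball. Once this input is in hand, the rest of the argument is routine.
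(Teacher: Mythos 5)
Your argument is correct, but it takes a genuinely different route from the paper. The paper does not subtract a particular solution at all: it reruns the proof of Corollary \ref{main4}, using the mean value identity to write $2\pi u(x_0)-\int_{S^1}u(x_0+\rho\theta)\,d\theta$ as the pairing of $f$ with the kernel $\ln\frac{\rho}{|x-x_0|}\chi_{B_\rho(x_0)}$, observes that this kernel lies in $BMO$ with norm independent of $\rho$, and invokes $\mathscr{H}^1$--$BMO$ duality to bound the pairing by $C\|f\|_{\mathscr{H}^1(B_1)}$; the Harnack inequality then follows from the same averaging-and-chaining argument as in Corollary \ref{main4}. You instead build a particular solution $w$ with $\|w\|_{C(\overline{B_1})}\le C\|f\|_{\mathscr{H}^1(B_1)}$ via the atomic decomposition and the uniform $L^\infty$ bound for Newtonian potentials of atoms (near the atom one subtracts $\ln r_j$ and uses the size condition; far away one subtracts $\ln|x-c_j|$ and uses the cancellation), and then applies the classical Harnack inequality to the nonnegative harmonic function $u-w+\|w\|_\infty$. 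Both proofs exploit exactly the same cancellation (the logarithmic kernel tested against $f$), packaged once as duality with $BMO$ and once as the atom-by-atom potential estimate. Your version additionally yields the interior estimate $\|u\|_{C(B_{1/2})}\le C(\|u\|_{L^1(B_1)}+\|f\|_{\mathscr{H}^1(B_1)})$ claimed in Remark 1, and replaces the chaining step by the off-the-shelf Harnack inequality; its cost is that it leans on the atomic characterization of the localized space $\mathscr{H}^1(B_1)$ and on the boundary correction by a harmonic function, points you rightly flag but which are standard (note that in the Coifman--Weiss space on a bounded domain the constant atom carries no cancellation, but its potential is trivially bounded on $B_1$, so Step 1 survives). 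The paper's route stays closer to its own Section 3 machinery and avoids the atomic decomposition, at the price of redoing the chaining argument.
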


 Finally, we prove the following global estimate for Poisson equations on Riemann surfaces.
 \begin{theorem}\label{main6}
 Suppose that $M$ satisfies (\ref{sla1.2}) and \eqref{sla1.1}. If $B_1$ is a unit geodesic ball in $M$, $u\in H^1_0(B_1)$, $f\in L\ln L(B_1)$ and
 $$\triangle u(x)=f(x), x\in B_1,$$
 then
 $$\|u\|_{C(B_1)}\leq C\|f\|^*_{L\ln L(B_1)},$$
 where $C$ depends only on $A,p$.
 \end{theorem}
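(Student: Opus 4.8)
The plan is to reduce the global bound to the interior estimate of Proposition~\ref{main2} combined with a boundary control coming from the fact that $u \in H^1_0(B_1)$, i.e. $u$ vanishes on $\partial B_1$. First I would use Proposition~\ref{main2} applied on a slightly shrunk ball — more precisely, for any $x_0\in B_1$ let $r=\frac{1}{2}\operatorname{dist}(x_0,\partial B_1)$ (rescaled to a unit ball via the isoperimetric/curvature hypotheses, which are scale-stable) to obtain
$$
|u(x_0)| \le C\bigl(\,\|u\|_{L^1(B_r(x_0))} + \|f\|^*_{L\ln L(B_r(x_0))}\,\bigr)\le C\bigl(\,\|u\|_{L^1(B_1)} + \|f\|^*_{L\ln L(B_1)}\,\bigr),
$$
using the monotonicity $\|f\|^*_{L\ln L(X_1)}\le\|f\|^*_{L\ln L(X)}$ noted in the introduction. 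Thus it suffices to absorb the term $\|u\|_{L^1(B_1)}$ into $\|f\|^*_{L\ln L(B_1)}$, and this is exactly where the homogeneous boundary condition enters.

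To estimate $\|u\|_{L^1(B_1)}$, I would test the equation $\triangle u = f$ against $u$ itself. Integration by parts on $B_1$ is legitimate because $u\in H^1_0(B_1)$ and no boundary term appears, giving $\int_{B_1}|\nabla u|^2 = -\int_{B_1} f u$. The right-hand side is a duality pairing: $L\ln L$ and $BMO$ (equivalently, since $u$ vanishes on the boundary so $f u$ pairs through the $\mathscr{H}^1$–$BMO$ duality after subtracting the mean, or more elementarily the Sobolev inequality \eqref{sobolev1.2}). Concretely, I would bound $|\int_{B_1} f u| \le \|f\|^*_{L\ln L(B_1)} \, \|u\|_{\exp L(B_1)}$ by the $L\ln L$–$\exp L$ duality (the dual of $L\ln L$ is the exponential-integrability class), and then control $\|u\|_{\exp L(B_1)}$ by $\|\nabla u\|_{L^2(B_1)}$ via the Moser–Trudinger inequality, which follows on $M$ from the Sobolev inequality \eqref{sobolev1.2} by summing the $L^q$ bounds $\|u\|_{L^q}\le \frac{q\sqrt A}{2}V(B_1)^{1/q}\|\nabla u\|_{L^2}$ over $q$. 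This yields $\|\nabla u\|_{L^2(B_1)}^2 \le C\|f\|^*_{L\ln L(B_1)}\|\nabla u\|_{L^2(B_1)}$, hence $\|\nabla u\|_{L^2(B_1)}\le C\|f\|^*_{L\ln L(B_1)}$, and then $\|u\|_{L^1(B_1)}\le C\|\nabla u\|_{L^2(B_1)}\le C\|f\|^*_{L\ln L(B_1)}$ again by \eqref{sobolev1.2} with $q=1$ (Poincaré). Feeding this back into the interior estimate gives $\|u\|_{C(B_1)}\le C\|f\|^*_{L\ln L(B_1)}$.

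The main obstacle I expect is making the duality step $|\int f u|\le \|f\|^*_{L\ln L}\|u\|_{\exp L}$ and the accompanying Moser–Trudinger estimate fully rigorous on the Riemann surface $M$ rather than on $\mathbb{R}^2$: one must verify that the curvature bound \eqref{sla1.1} together with the isoperimetric inequality \eqref{sla1.2} really does deliver a Moser–Trudinger inequality with constants depending only on $A$ and $p$, uniformly over all unit geodesic balls, and that the rearrangement-based norm $\|\cdot\|^*_{L\ln L}$ is the correct predual norm in this setting. A secondary technical point is the rescaling argument used to invoke Proposition~\ref{main2} on balls $B_r(x_0)$ with $r<1$: one should confirm that dilating the metric to normalize the ball to unit size preserves \eqref{sla1.2} and only improves \eqref{sla1.1} (since $\|K\|_{L^p}$ scales favorably under blow-up in dimension two), so the constant $C$ stays uniform. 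Once these are in place, the rest is the bookkeeping sketched above.
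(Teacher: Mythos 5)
Your energy step is sound and in fact coincides with the paper's: testing $\triangle u=f$ against $u$, using the Sobolev inequality \eqref{sobolev1.2} to sum the $L^k$ bounds into an exponential (John--Nirenberg/Moser--Trudinger) estimate, converting that into $u^*(t)\leq C\|\nabla u\|_{L^2}\ln\frac{V}{t}$, and pairing with $f^*$ via the rearrangement inequality gives exactly $\|\nabla u\|_{L^2}\leq C\|f\|^*_{L\ln L(B_1)}$ and hence $\|u\|_{L^1(B_1)}\leq C\|f\|^*_{L\ln L(B_1)}$. This part needs no new Moser--Trudinger machinery beyond what \eqref{sobolev1.2} already provides.

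The gap is in your first step. Rescaling Proposition \ref{main2} to the ball $B_r(x_0)$ with $r=\frac12\mathrm{dist}(x_0,\partial B_1)$ does not give $|u(x_0)|\leq C(\|u\|_{L^1(B_r(x_0))}+\|f\|^*_{L\ln L(B_r(x_0))})$ with a uniform $C$; the correct scaled form is
\begin{align*}
|u(x_0)|\leq C\Bigl(\frac{\|u\|_{L^1(B_r(x_0))}}{r^{2}}+\|f\|^*_{L\ln L(B_r(x_0))}\Bigr),
\end{align*}
and the factor $r^{-2}$ blows up as $x_0$ approaches $\partial B_1$. (This is already visible in the proof of Theorem \ref{main1}, where the bound carries $\rho_0^{-2}\|u\|_{L^1}$.) So your argument only yields interior estimates on compact subsets of $B_1$, not the claimed bound for $\|u\|_{C(B_1)}$ uniformly up to the boundary, and the membership $u\in H^1_0(B_1)$ by itself does not supply pointwise control near $\partial B_1$ in two dimensions. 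The missing idea is the paper's device: extend $f$ by zero to $B_2$, let $v\in H^1_0(B_2)$ solve $\triangle v=f$ there, run your energy/duality argument for $v$ on $B_2$ to get $\|v\|_{L^1(B_2)}\leq C\|f\|^*_{L\ln L(B_1)}$, apply the interior estimate on $B_2$ (so that $\overline{B_1}$ sits well inside and the constant is uniform, after an approximation $\eta_nf$ to keep the data smooth), and then observe that $u-v$ is harmonic in $B_1$ with $u=0$ on $\partial B_1$, so the maximum principle gives $\|u-v\|_{C(B_1)}\leq\|v\|_{C(\partial B_1)}$ and hence $\|u\|_{C(B_1)}\leq 2\|v\|_{C(B_1)}\leq C\|f\|^*_{L\ln L(B_1)}$.
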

 \textbf{Remark 3.} Unlike the case discussed in Remark 1, we cannot replace $f\in L\ln L(B_1)$ by $f\in \mathscr{H}^1_{\mathrm{at}}(B_1)$ in the proof of this theorem.

 In Section 2 we derive some results for the surface $M$. We prove Theorem \ref{main1} in Section 3, two Harnack's inequalities in Section 4 and
 Theorem \ref{main6} in Section 5.

 In what follows, the constant $C$ denotes a generic positive constant that may change from line to line,
 but depends only on $A$, $p$, and the structure of $g$.

\section{Some lemmas}

 For any $x_0\in M$, by the semi-geodesic coordinate grid around $x_0$ we can write the first fundamental form as
 $dr^2+G^2(r,\theta)d\theta^2$ where
 $$r>0,0\leq \theta<2\pi,G(r,\theta)\geq 0, G(0,\theta)=0, \partial_rG(0,\theta)=1.$$

 In this note, $G(r,\theta)$ may vary for different $x_0$. But one can see that all the constants are uniformly in $x_0$. So we use the same symbol $G(r,\theta)$ for all points.

 \begin{lemma}\label{low}
 If $M$ satisfies the isoperimetric inequality \eqref{sla1.2}, then for any\\ $x_0\in M,r>0$, there holds
 \begin{align}
 V(B_r(x_0))\geq \frac{r^2}{4A},\quad l(\partial B_r(x_0))\geq \frac{r}{2A}.\label{sla1.3}
 \end{align}
  \end{lemma}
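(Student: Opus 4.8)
The plan is to exploit the isoperimetric inequality \eqref{sla1.2} together with the coarea formula to turn the statement into an ODE inequality for the volume function $v(r) := V(B_r(x_0))$. First I would note that, writing $\Omega = B_r(x_0)$ and $\partial\Omega = \partial B_r(x_0)$, the isoperimetric inequality gives $v(r) \le A\, l(\partial B_r(x_0))^2$. Next, the coarea formula (or simply differentiating the geodesic-polar volume expression $v(r) = \int_0^r \int_0^{2\pi} G(s,\theta)\,d\theta\,ds$) yields $v'(r) = \int_0^{2\pi} G(r,\theta)\,d\theta = l(\partial B_r(x_0))$ for a.e. $r$, as long as $B_r(x_0)$ stays inside the region covered by the semi-geodesic chart. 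Combining these two facts gives the differential inequality
\begin{align*}
v(r) \le A\, v'(r)^2, \quad\text{equivalently}\quad \sqrt{v(r)} \le \sqrt{A}\, v'(r).
\end{align*}

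From here I would integrate. Since $\frac{d}{dr}\sqrt{v(r)} = \frac{v'(r)}{2\sqrt{v(r)}} \ge \frac{1}{2\sqrt{A}}$, and $v(0) = 0$, integrating from $0$ to $r$ gives $\sqrt{v(r)} \ge \frac{r}{2\sqrt{A}}$, i.e. $V(B_r(x_0)) \ge \frac{r^2}{4A}$. Feeding this back into the isoperimetric inequality $V(B_r(x_0)) \le A\, l(\partial B_r(x_0))^2$ gives $l(\partial B_r(x_0))^2 \ge \frac{r^2}{4A^2}$, hence $l(\partial B_r(x_0)) \ge \frac{r}{2A}$, which is the second claimed bound. (Alternatively, the length bound follows directly from $l(\partial B_r(x_0)) = v'(r) \ge \sqrt{v(r)/A} \ge \frac{r}{2A}$.)

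The main technical point to be careful about is the validity of the identity $v'(r) = l(\partial B_r(x_0))$ and of the isoperimetric inequality for \emph{all} $r > 0$, not just small $r$: for large $r$ the geodesic sphere $\partial B_r(x_0)$ need not be rectifiable or embedded, and the semi-geodesic coordinates break down past the cut locus. The standard way around this is to work with the function $v(r)$, which is still monotone and locally Lipschitz (indeed $v'(r) \le l(\partial B_r(x_0))$ always, with equality before the cut locus), and to observe that $l(\partial B_r(x_0))$ in \eqref{sla1.2} should be read as the perimeter (one-dimensional Hausdorff measure of the reduced boundary, or $\liminf$ of perimeters of smooth approximations); with that reading the inequality $\sqrt{v} \le \sqrt{A}\, v'$ holds a.e. and the integration argument goes through unchanged. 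If one prefers to avoid this subtlety entirely, one may simply restrict to $r$ below the injectivity radius and then note that both sides of \eqref{sla1.3} are monotone in $r$ while $V(M)$ may be infinite, so the bound for small $r$ plus a covering/additivity argument extends it; but the cleanest route is the coarea/ODE argument above.
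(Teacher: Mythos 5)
Your argument is exactly the paper's: combine the isoperimetric inequality with $\frac{d}{dr}V(B_r(x_0))=l(\partial B_r(x_0))$ to get $\frac{d}{dr}\sqrt{V(B_r(x_0))}\geq \frac{1}{2\sqrt{A}}$, integrate, and feed the volume bound back into \eqref{sla1.2} for the length bound. Your additional remarks on the cut locus and the reading of $l(\partial B_r)$ as perimeter are reasonable care the paper does not spell out, but the proof itself is the same.
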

 \begin{proof}
 From (\ref{sla1.2}), we have
 $$\frac{d}{dr}V(B_r(x_0))=l(\partial B_r(x_0))\geq \sqrt{\frac{V(B_r(x_0))}{A}}$$
 which yields that
 $$\frac{d}{dr}\sqrt{V(B_r(x_0))}\geq \frac{1}{\sqrt{4A}}.$$
 So, one obtain that $V(B_r(x_0))\geq \frac{r^2}{4A}$ and $l(\partial B_r(x_0))\geq \frac{r}{2A}$ from \eqref{sla1.2}. This finishes the proof.
 \end{proof}

 \begin{lemma}\label{up}
 If $M$ satisfies \eqref{sla1.1}, then for any $x_0\in M,0<r<1$, there holds
 \begin{align}
 V(B_r(x_0))\leq (2\pi+A)^{p+1}r^2,l(\partial B_r(x_0))\leq (2\pi+A)^{p+1}r.\label{sla1.4}
 \end{align}
 \end{lemma}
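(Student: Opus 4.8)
The plan is to derive an upper bound for $G(r,\theta)$ along each geodesic ray and then integrate in $\theta$. Recall that in the semi-geodesic coordinates $dr^2+G^2(r,\theta)d\theta^2$ the Gauss curvature satisfies the Jacobi equation $\partial_r^2 G(r,\theta)=-K_M\,G(r,\theta)$, with the initial data $G(0,\theta)=0$, $\partial_r G(0,\theta)=1$. Writing $h(r)=\partial_r G(r,\theta)$, one has $h(0)=1$ and $h(r)=1-\int_0^r K_M(s,\theta)G(s,\theta)\,ds$, so for $r<1$,
\begin{align*}
G(r,\theta)\le r+\int_0^r\!\!\int_0^t |K_M(s,\theta)|\,G(s,\theta)\,ds\,dt\le r+\int_0^r |K_M(s,\theta)|\,G(s,\theta)\,ds .
\end{align*}
First I would apply a Gronwall-type inequality in the radial variable to get a pointwise bound $G(r,\theta)\le r\exp\!\big(\int_0^1 |K_M(s,\theta)|\,ds\big)$; the subtlety is that the exponent involves an integral of $|K_M|$ along a single ray, which is not directly controlled by the $L^p$ bound \eqref{sla1.1} on balls. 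To handle this, I would integrate the resulting bound over $\theta$ and use Fubini together with Hölder's inequality in the measure $G(s,\theta)\,ds\,d\theta$: schematically,
\begin{align*}
V(B_r(x_0))=\int_0^{2\pi}\!\!\int_0^r G(s,\theta)\,ds\,d\theta ,
\end{align*}
and controlling $\int |K_M|\,G$ over $B_1(x_0)$ by $\|K_M\|_{L^p(B_1(x_0))}\,V(B_1(x_0))^{1/p'}$ feeds back into the same quantity $V(B_1(x_0))$, giving a closed inequality of the form $V\le 2\pi r^2+\|K_M\|_{L^p}V^{1/p'}\cdot(\text{something})$ that can be solved for $V$.

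The cleaner route, which I would actually follow, is to run the whole estimate on $V(r):=V(B_r(x_0))$ directly. Set $L(r)=l(\partial B_r(x_0))=\int_0^{2\pi}G(r,\theta)\,d\theta$, so $V'(r)=L(r)$ and, integrating the Jacobi equation in $\theta$, $L'(r)=\int_0^{2\pi}\partial_r^2G\,d\theta=-\int_0^{2\pi}K_M\,G\,d\theta$. Hence $L'(r)\le \int_0^{2\pi}|K_M(r,\theta)|G(r,\theta)\,d\theta$, and by Hölder in $\theta$ (with the area density $G\,d\theta$) this is at most $\big(\int_{\partial B_r}|K_M|^p\,d\sigma\big)^{1/p}\cdot L(r)^{1/p'}$ — or more simply, integrating in $r$ as well, $L(r)\le 2\pi+\int_{B_r(x_0)}|K_M|\,dA\le 2\pi+\|K_M\|_{L^p(B_1(x_0))}V(B_1(x_0))^{1/p'}\le 2\pi+A\,V(B_1(x_0))^{1/p'}$ for $r<1$. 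Thus $V'(r)=L(r)\le 2\pi+A\,V(1)^{1/p'}$ for all $r\le 1$, which integrates to $V(r)\le (2\pi+A\,V(1)^{1/p'})\,r$; taking $r\to 1$ gives $V(1)\le 2\pi+A\,V(1)^{1/p'}$, and since $1/p'<1$ this bounds $V(1)$ by an explicit constant, which one checks is $\le (2\pi+A)^{p+1}$ after elementary manipulation. Feeding this back yields $V(B_r(x_0))\le (2\pi+A)^{p+1}r^2$ (using $r\le r^2\cdot r^{-1}$ is not needed — rather $V(r)\le V(1)r\le (2\pi+A)^{p+1}r$, and the quadratic bound follows by rescaling the same argument on $B_r$, noting the constant only improves) and $l(\partial B_r(x_0))=V'(r)\le (2\pi+A)^{p+1}r$.

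The main obstacle is the self-referential nature of the estimate: the bound on $V(B_r)$ requires an integral of $|K_M|$ over the ball, which via Hölder brings back $V(B_1)$ with a sublinear power. The key realization that makes it work is precisely that the exponent $1/p'=1-1/p$ is strictly less than $1$ (this is where the hypothesis $p>1$ in \eqref{sla1.1} is essential), so the inequality $V(1)\le 2\pi+A\,V(1)^{1/p'}$ closes and produces an a priori bound. A secondary point requiring care is justifying the Jacobi equation and the differentiation under the integral sign up to the cut locus; since we only need an inequality for $V$ and $l$ and these are monotone, one can work with $r$ below the injectivity radius and then pass to general $r<1$ by a covering/continuity argument, or simply note the formulas hold in the sense of distributions with the correct sign.
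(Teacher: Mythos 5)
Your overall strategy is the paper's: use the Jacobi equation $\partial_{rr}^2G=-K_MG$ with $G(0,\theta)=0$, $\partial_rG(0,\theta)=1$, integrate in $\theta$ to bound $l(\partial B_r)$ by $2\pi(\cdot)+\int_{B_r}|K_M|\,dA$, apply H\"older to bring in $V(B_r)^{1-1/p}$, and close the resulting self-referential inequality using $1-\frac1p<1$. The difference is in how the \emph{quadratic} dependence on $r$ is extracted, and this is where your write-up has a genuine gap. In your "cleaner route" you discard the radial factor: from $G(r,\theta)\le r+\int_0^r(r-s)|K_M|G\,ds$ you pass to $L(r)\le 2\pi+\int_{B_r}|K_M|\,dA$, which only yields $V(r)\le Cr$ and $L(r)\le C$ — linear and constant bounds, not the claimed $Cr^2$ and $Cr$. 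The attempted repair, "the quadratic bound follows by rescaling the same argument on $B_r$," does not work: there is no scale invariance on a general Riemann surface, and rerunning your estimate on $B_r$ still produces the additive constant $2\pi$ rather than $2\pi r$, so nothing improves as $r\to0$.

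The fix is to keep the factor you dropped, which is exactly what the paper does: $\int_0^r(r-s)|K_M|G\,ds\le r\int_0^r|K_M|G\,ds$ gives $l(\partial B_r)\le 2\pi r+r\|K_M\|_{L^1(B_r)}\le 2\pi r+ArV(B_r)^{1-\frac1p}$, and integrating in $r$ gives $V(B_r)\le \pi r^2+\tfrac{A}{2}r^2V(B_r)^{1-\frac1p}$. At that point your idea of first deriving an a priori bound on $V(B_1)$ from the closed inequality at $r=1$ and then feeding it back into the right-hand side for general $r<1$ does work, and is arguably tidier than the paper's three-case analysis ($V\le1$; $V>1$ with $r<A^{-1/2}$; $V>1$ with $r\ge A^{-1/2}$), though one still has to verify the explicit constant $(2\pi+A)^{p+1}$. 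Your closing remark about the cut locus is a fair caution; the paper works with the same semi-geodesic formulas without further comment, so you are not behind it on that point.
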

 \begin{proof} Note that $K_M(r,\theta)=-\frac{\partial^2_{rr}G(r,\theta)}{G(r,\theta)}$. When $r<1$, by \eqref{sla1.1} we have
  \begin{align}
 l(\partial B_r(x_0))&=\int^{2\pi}_{0}G(r,\theta)d\theta\nonumber\\
 &\leq \int^{2\pi}_{0}\int^{r}_{0}|\partial_{r}G(r,\theta)|dtd\theta\nonumber\\
 &\leq \int^{2\pi}_{0}\int^{r}_{0}\left(1+\int^{t}_{0}|\partial^2_{rr}G(s,\theta)|ds\right)dtd\theta\nonumber\\
 &=2\pi r+\int^{2\pi}_{0}\int^{r}_{0}|\partial^2_{rr}G(s,\theta)|(r-s)dsd\theta\nonumber\\
 &\leq 2\pi r+r\int^{2\pi}_{0}\int^{r}_{0}|K_M(s,\theta)|G(s,\theta)dsd\theta\nonumber\\
 &=2\pi r+r\|K_M\|_{L^1(B_r(x_0))}\nonumber\\
 &\leq 2\pi r+ArV(B_{r}(x_0))^{1-\frac 1p}.\label{sla1.5}
 \end{align}
 Therefore, we can obtain that
 \begin{align*}
 V(B_{r}(x_0))&=\int^{r}_{0}l(\partial B_t(x_0))dt\\
 &\leq\int^{r}_{0}(2\pi t+AtV(B_{t}(x_0))^{1-\frac 1p})dt\\
 &\leq \pi r^2+\frac{Ar^2V(B_{r}(x_0))^{1-\frac 1p}}{2}.
 \end{align*}
 When $V(B_{r}(x_0))\leq 1$, we have
 $$V(B_{r}(x_0))\leq (\pi+\frac{A}{2}) r^2.$$
  When $V(B_{r}(x_0))>1$ and $r<\frac{1}{\sqrt{A}}$, we can get that
 $$V(B_{r}(x_0))\leq \pi r^2+\frac{Ar^2V(B_{r}(x_0))^{1-\frac 1p}}{2}\leq\pi r^2+\frac{V(B_{r}(x_0))}{2}$$
 which implies that
 $$V(B_{r}(x_0))\leq 2\pi r^2.$$
 If $V(B_{r}(x_0))>1$ and $\frac{1}{\sqrt{A}}\leq r<1$, we obtain that
\begin{align*}
 V(B_{r}(x_0))\leq \pi r^2+\frac{Ar^2V(B_{r}(x_0))^{1-\frac 1p}}{2}\leq \pi+\frac{AV(B_{r}(x_0))^{1-\frac 1p}}{2}.
 \end{align*}
 Thus, when $V(B_{r}(x_0))>1$ and $\frac{1}{\sqrt{A}}\leq r<1$, we have
\begin{align*}
 V(B_{r}(x_0))\leq (\pi+\frac A2)^p\leq A(\pi+\frac A2)^pr^2.
 \end{align*}
 So, for any $r<1$, we can always obtain that
 $$V(B_{r}(x_0))\leq (2\pi+A)^{p+1}r^2.$$
 Finally, for any $r<1$, by (\ref{sla1.5}) we have
  \begin{align*}
 l(\partial B_r(x_0))\leq 2\pi r+Ar(2\pi+A)^{(p+1)(1-\frac 1p)}r^{2-\frac 2p}\leq (2\pi+A)^{p+1}r.
 \end{align*}
 This finishes the proof.
\end{proof}

 \begin{lemma}\label{slnL}
  Suppose that $M$ satisfies (\ref{sla1.2}). For any $x_0\in M,R>0$ there holds
 \begin{align*}
 \int_{B_R(x_0)} |f(x)|\int^R_{d(x,x_0)}\frac{1}{l(\partial B_r(x_0))}drdV\leq A\|f\|^*_{L\ln L(B_R(x_0))}.
 \end{align*}
 \end{lemma}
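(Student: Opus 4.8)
The plan is to exchange the order of integration and reduce everything to a one-dimensional estimate on the non-increasing rearrangement. First I would use the coarea-type identity $dV = l(\partial B_r(x_0))\,dr\,$ (more precisely, writing $dV = G(r,\theta)\,dr\,d\theta$ and integrating the inner radial integral against $d\theta$), together with Fubini, to rewrite the left-hand side. For a fixed radius $r \in (0,R)$, the set of points $x \in B_R(x_0)$ with $d(x,x_0) < r$ is exactly $B_r(x_0)$, so swapping the $r$-integral to the outside gives
\begin{align*}
\int_{B_R(x_0)} |f(x)|\int^R_{d(x,x_0)}\frac{dr}{l(\partial B_r(x_0))}\,dV
= \int_0^R \frac{1}{l(\partial B_r(x_0))}\left(\int_{B_r(x_0)} |f(x)|\,dV\right)dr.
\end{align*}

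Next I would bound the inner mass $\int_{B_r(x_0)}|f|\,dV$ by the rearrangement. Since $V(B_r(x_0)) =: v(r)$ is the measure of the ball, the Hardy–Littlewood inequality gives $\int_{B_r(x_0)}|f|\,dV \le \int_0^{v(r)} f^*(t)\,dt$. Combining with Lemma 1.7 (the lower bound $l(\partial B_r(x_0)) \ge r/(2A)$) would already control $1/l(\partial B_r)$, but it is cleaner to use $l(\partial B_r(x_0)) = v'(r)$ and change variables $t = v(r)$, $dt = v'(r)\,dr$. This turns the radial integral into
\begin{align*}
\int_0^R \frac{1}{v'(r)}\left(\int_0^{v(r)} f^*(s)\,ds\right)dr
= \int_0^{v(R)} \frac{1}{v'(r(t))}\left(\int_0^t f^*(s)\,ds\right)\frac{dt}{v'(r(t))}\cdot v'(r(t))
= \int_0^{v(R)} \frac{1}{v'(r(t))}\left(\int_0^t f^*(s)\,ds\right)dt.
\end{align*}
Hmm — I need to be a little more careful: after the substitution one gets $\int_0^{v(R)} \big(\int_0^t f^*(s)\,ds\big)\,\frac{dt}{v'(r(t))^2}$, which still carries a factor $v'(r(t))^{-2}$; so instead the better route is to keep one copy of $1/l(\partial B_r)$ and estimate it directly. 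Using $l(\partial B_r(x_0)) \ge r/(2A)$ from Lemma 1.7 together with $v(r) \le (2\pi+A)^{p+1}r^2$-type control is one option, but the sharpest is: by the isoperimetric inequality $l(\partial B_r)^2 \ge v(r)/A$, hence $1/l(\partial B_r) \le \sqrt{A}/\sqrt{v(r)} \le \sqrt{A}\, v'(r)/\big(2 v(r)\big)$ is false in general; rather I should just write $1/l(\partial B_r) = 1/v'(r)$ and, after the substitution $t=v(r)$, note $dr = dt/v'(r)$, so
\begin{align*}
\int_0^R \frac{1}{v'(r)}\left(\int_0^{v(r)} f^*(s)\,ds\right)dr
= \int_0^{v(R)} \frac{1}{v'(r(t))^2}\left(\int_0^t f^*(s)\,ds\right)dt,
\end{align*}
and now apply $v'(r)^2 = l(\partial B_r)^2 \ge v(r)/A = t/A$ from \eqref{sla1.2}. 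This gives the bound $A\int_0^{v(R)} \frac{1}{t}\big(\int_0^t f^*(s)\,ds\big)\,dt$.

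Finally I would recognize the one-dimensional integral as (a constant times) $\|f\|^*_{L\ln L(B_R(x_0))}$. Swapping the order of integration in $\int_0^{v(R)} \frac{1}{t}\int_0^t f^*(s)\,ds\,dt$ gives $\int_0^{v(R)} f^*(s)\int_s^{v(R)} \frac{dt}{t}\,ds = \int_0^{v(R)} f^*(s)\ln\frac{v(R)}{s}\,ds$, which (since $f^*$ is supported on $[0,v(R)]$ for a function on $B_R(x_0)$) is precisely $\|f\|^*_{L\ln L(B_R(x_0))}$ with $|X| = V(B_R(x_0))$. Putting the constants together yields the factor $A$ claimed (after absorbing, if needed, by enlarging $A$ as the paper allows). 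The main obstacle is the bookkeeping in the substitution $t = v(r)$: one must verify $v$ is absolutely continuous and strictly increasing so that $v'(r) = l(\partial B_r(x_0))$ a.e. and the change of variables is legitimate, and one must handle the degeneracy $v'(r) \to 0$ as $r\to 0$ — this is exactly where the isoperimetric inequality $v'(r)^2 \ge v(r)/A$ saves the day, since it forces the integrand $\frac{1}{v'(r)^2}\int_0^{v(r)} f^*$ to be integrable near $r = 0$.
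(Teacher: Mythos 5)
Your final argument is correct and yields the stated constant $A$ exactly, so the lemma is proved; the route is a genuine (if mild) reorganization of the paper's. The paper keeps the weight $h(x)=\int_{d(x,x_0)}^{R} l(\partial B_r(x_0))^{-1}\,dr$ intact, bounds it pointwise by $A\ln\bigl(V(B_R(x_0))/V(B_{d(x,x_0)}(x_0))\bigr)$ via the same two ingredients you use (the substitution $dV(B_r)=l(\partial B_r)\,dr$ and the isoperimetric bound $l(\partial B_r)^{-2}\le A/V(B_r)$), reads off $h^*(t)\le A\ln(V(B_R(x_0))/t)$, and concludes with the two-function Hardy--Littlewood inequality $\int |fh|\le\int f^*h^*$ (Theorem 4.15 of the cited book of Castillo--Rafeiro). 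You instead apply Fubini first, invoke only the more elementary one-function inequality $\int_{B_r}|f|\le\int_0^{v(r)}f^*$, and generate the logarithm by a second Fubini; this spares you from identifying the decreasing rearrangement of the weight $h$, at the cost of the change of variables $t=v(r)$, whose legitimacy you correctly flag --- in the paper's smooth setting $v$ is $C^1$ and strictly increasing because $l(\partial B_r)\ge r/(2A)>0$ by Lemma \ref{low}, so this is harmless. One cosmetic point: the false starts in the middle of your write-up (the aborted attempt to bound $1/l(\partial B_r)$ by $\sqrt{A}/\sqrt{v(r)}$ and then by a multiple of $v'(r)/v(r)$) should be deleted; only the final chain $\int_0^{v(R)}v'(r(t))^{-2}F(t)\,dt\le A\int_0^{v(R)}t^{-1}F(t)\,dt$ with $F(t)=\int_0^t f^*(s)\,ds$, followed by the second Fubini, is needed.
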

 \begin{proof} Set $h(x)=\int^R_{d(x,x_0)}\frac{1}{l(\partial B_r(x_0))}dr$. From \eqref{sla1.2} and the identity\\
  $\frac{d}{dr}V(B_r(x_0))=l(\partial B_r(x_0))$, we have
\begin{align*}
 h(x)=&\int^R_{d(x,x_0)}\frac{1}{l(\partial B_r(x_0))}dr\\
 =&\int^R_{d(x,x_0)}l(\partial B_r(x_0))^{-2}dV(B_{r}(x_0))\\
 \leq &\int^R_{d(x,x_0)}\frac{A}{V(B_{r}(x_0))}dV(B_{r}(x_0))\\
 = &A\ln \frac{V(B_R(x_0))}{V(B_{d(x,x_0)}(x_0))}.
 \end{align*}
 So we obtain that
 $$h^*(V(B_t(x_0)))\leq A\ln \frac{V(B_R(x_0))}{V(B_t(x_0))}$$
  which yields that
  $$h^*(t)\leq A\ln \frac{V(B_R(x_0))}{t}.$$
 Thus, by \cite[Theorem 4.15]{CR16} we get that
 \begin{align*}
 \int_{B_R(x_0)} |f(x)|\int^R_{d(x,x_0)}\frac{1}{l(\partial B_r(x_0))}drdV
 =&\int_{B_R(x_0)} |f(x)h(x)|dV\\
 \leq &\int^{V(B_R(x_0))}_{0} f^*(t)h^*(t)dt\\
\leq &A\int^{V(B_R(x_0))}_{0} f^*(t)\ln \frac{V(B_R(x_0))}{t}dt\\
=&A\|f\|^*_{L\ln L(B_R(x_0))}.
 \end{align*}
 This finishes the proof.
 \end{proof}

\section{Proof of Theorem \ref{main1}}

 For convenience, we denote the center of $B_1$ by $O$ and $|x|=d(x,O)$ for $x\in B_1$.

 For any $x_0\in B_1,r<1-|x_0|,\theta\in [0,2\pi]$, by the fact $G(0,\theta)=0$, one can get that
 $$|G(r,\theta)-r\partial_r G(r,\theta)|\leq \int^r_0 t|\partial^2_{rr} G(t,\theta)|dt.$$
 Thus, for any $\theta,\varphi\in [0,2\pi]$, we have
  \begin{align*}
 &|\partial_r G(r,\theta)G(r,\varphi)-G(r,\theta)\partial_r G(r,\varphi)|\\
 =&|(G(r,\varphi)-r\partial_r G(r,\varphi))\partial_r G(r,\theta)-(G(r,\theta)-r\partial_r G(r,\theta))\partial_r G(r,\varphi)|\\
 \leq &\int^r_0 t(|\partial^2_{rr} G(t,\varphi)\partial_r G(r,\theta)|+|\partial^2_{rr} G(t,\theta)\partial_r G(r,\varphi)|)dt.
 \end{align*}
 Therefore, for any $0\leq \rho<1-|x_0|$, from  \eqref{sla1.1}, \eqref{sla1.3}, \eqref{sla1.4}, $\partial_r G(0,\theta)=1$, the identity
 $l(\partial B_r(x_0))=\int^{2\pi}_0 G(r,\varphi)d\varphi$ and $K_M(r,\theta)=-\frac{\partial^2_{rr} G(r,\theta)}{G(r,\theta)}$, we obtain that
\begin{align}
 &\int^{\rho}_{0}\int^{2\pi}_{0} |\frac{\partial}{\partial r}(\frac{G(r,\theta)}{l(\partial B_r(x_0))})|d\theta dr\nonumber\\
 =&\int^{\rho}_{0}\int^{2\pi}_{0} \frac{|\partial_r G(r,\theta)l(\partial B_r(x_0))-G(r,\theta)\partial_r l(\partial B_r(x_0))|}{l(\partial B_r(x_0))^2}d\theta dr\nonumber\\
 \leq &C\int^{\rho}_{0}r^{-2}\int^{2\pi}_{0} \int^{2\pi}_0|\partial_r G(r,\theta)G(r,\varphi)-G(r,\theta)\partial_r G(r,\varphi)|d\varphi d\theta dr\nonumber\\
 \leq &C\int^{\rho}_{0}r^{-2}(\int^{2\pi}_{0}|\partial_r G(r,\theta)| d\theta)\left(\int^r_0t\int^{2\pi}_0|\partial^2_{rr} G(t,\varphi)|d\varphi dt\right)dr\nonumber\\
 \leq &C\int^{\rho}_{0}r^{-1}\left(\int^{2\pi}_{0}(1+\int^r_0|\partial^2_{rr} G(t,\theta)|dt) d\theta\right)\left(\int^r_0\int^{2\pi}_0|\partial^2_{rr} G(t,\varphi)|d\varphi dt\right)dr\nonumber\\
 \leq &C\int^{\rho}_{0}r^{-1}(2\pi+\|K_M\|_{L^1(B_r(x_0))})\|K_M\|_{L^1(B_r(x_0))}dr\nonumber\\
 \leq &C\int^{\rho}_{0}r^{-1}(2\pi+V(B_r(x_0))^{1-\frac 1p}\|K_M\|_{L^p(B_1(x_0))})V(B_r(x_0))^{1-\frac 1p}\|K_M\|_{L^p(B_1(x_0))}dr\nonumber\\
 \leq &C\int^{\rho}_{0}r^{-1}(1+r^{2-\frac 2p})r^{2-\frac 2p}dr\nonumber\\
  \leq &C\int^{\rho}_{0}r^{1-\frac 2p}dr=C\rho^{2-\frac 2p}.\label{sl2.1}
\end{align}

 By the divergence theorem, Lemma \ref{slnL}, and \eqref{sl2.1}, for any $\rho<1-|x_0|$, we obtain that
 \begin{align}
 &|u(x_0)-\frac{1}{l(\partial B_{\rho}(x_0))}\int^{2\pi}_{0} u(\rho,\theta)G(\rho,\theta)d\theta|\nonumber\\
 =&\left|\int^{\rho}_{0} \frac{\partial}{\partial r}\left(\frac{1}{l(\partial B_r(x_0))}\int^{2\pi}_{0}u(r,\theta)G(r,\theta)d\theta\right) dr\right|\nonumber\\
 =&\left|\int^{\rho}_{0}\left(\frac{1}{l(\partial B_r(x_0))}\int^{2\pi}_{0} \partial_r u(r,\theta)G(r,\theta)d\theta-\int^{2\pi}_{0} u(r,\theta)\frac{\partial}{\partial r}(\frac{G(r,\theta)}{l(\partial B_r(x_0))})d\theta\right)dr\right|\nonumber\\
 \leq &\left|\int^{\rho}_{0}\left(\frac{1}{l(\partial B_r(x_0))}\int_{B_r(x_0)} \triangle u(x)dV\right)dr\right|+
 \int^{\rho}_{0}\int^{2\pi}_{0} |\frac{\partial}{\partial r}(\frac{G(r,\theta)}{l(\partial B_r(x_0))})|d\theta dr\|u\|_{C(B_{\rho}(x_0))}\nonumber\\
\leq &\int^{\rho}_{0}\frac{1}{l(\partial B_r(x_0))}\int_{B_r(x_0)} (|g(x)u(x)|+|f(x)|)dVdr+
 C\rho^{2-\frac 2p}\|u\|_{C(B_{\rho}(x_0))}\nonumber\\
 =&\int_{B_{\rho}(x_0)} (|g(x)u(x)|+|f(x)|)\int^{\rho}_{|x-x_0|}\frac{1}{l(\partial B_t(x_0))}dtdV+C\rho^{2-\frac 2p}\|u\|_{C(B_{\rho}(x_0))}\nonumber\\
 \leq & C[\|f\|^*_{L\ln L(B_{\rho}(x_0))}+(\|g\|^*_{L\ln L(B_{\rho}(x_0))}+\rho^{2-\frac 2p})\|u\|_{C(B_{\rho}(x_0))}].\label{sl2.3}
 \end{align}
 By the absolute continuity, when $g\in L\ln L(B_1)$, we can find $\rho_0\in(0,\frac 12)$ depending only on $A,p$ and the function $g$ such that
 $$C(\|g\|^*_{L\ln L(B_{\rho_0}(x_0))}+\rho_0^{2-\frac 2p})<\frac 18$$
 for any $x_0\in B_{\frac 12}$. Then from \eqref{sl2.3}, for any $\rho<\rho_0$ we get that
 \begin{align*}
 &|u(x_0)-\frac{1}{l(\partial B_{\rho}(x_0))}\int^{2\pi}_{0} u(\rho,\theta)G(\rho,\theta)d\theta|\\
 \leq &C[\|f\|^*_{L\ln L(B_{\rho}(x_0))}+(\|g\|^*_{L\ln L(B_{\rho}(x_0))}+\rho^{2-\frac 2p})\|u\|_{C(B_{\rho}(x_0))}]\\
 \leq &C\|f\|^*_{L\ln L(B_1)}+\frac {1}{8}\|u\|_{C(B_{\rho}(x_0))}.
 \end{align*}
 Thus, for any $t<\rho_0$, integrating over $(0,t)$ we obtain
  \begin{align}
 &V(B_t(x_0))|u(x_0)|=\int^t_0 l(\partial B_{\rho}(x_0))|u(x_0)|d\rho\nonumber\\
 \leq &\int^t_0\int^{2\pi}_{0} |u(\rho,\theta)|G(\rho,\theta)d\theta d\rho+\int^t_0 l(\partial B_{\rho}(x_0))[C\|f\|^*_{L\ln L(B_1)}+\frac {1}{8}\|u\|_{C(B_{\rho}(x_0))}]d\rho\nonumber\\
  \leq &\|u\|_{L^1(B_t(x_0))}+V(B_t(x_0))(C\|f\|^*_{L\ln L(B_1)}+\frac 18\|u\|_{C(B_t(x_0))}).\label{sl2.4}
 \end{align}
 From \eqref{sla1.3} and \eqref{sl2.4}, we obtain
 \begin{align*}
 |u(x_0)|\leq &\frac{\|u\|_{L^1(B_t(x_0))}}{V(B_t(x_0))}+C\|f\|^*_{L\ln L(B_1)}+\frac 18\|u\|_{C(B_t(x_0))}\\
 \leq &\frac{4A\|u\|_{L^1(B_1)}}{t^2}+C\|f\|^*_{L\ln L(B_1)}+\frac 18\|u\|_{C(B_t(x_0))}.
 \end{align*}

 Clearly, for any $y_0\in B_r(x_0),r,t<\frac{\rho_0}{2}$, we can also get that
 \begin{align}
 |u(y_0)|\leq &\frac{4A\|u\|_{L^1(B_1)}}{t^2}+C\|f\|^*_{L\ln L(B_1)}+\frac 18\|u\|_{C(B_t(y_0))}\nonumber\\
 \leq &\frac{4A\|u\|_{L^1(B_1)}}{t^2}+C\|f\|^*_{L\ln L(B_1)}+\frac 18\|u\|_{C(B_{r+t}(x_0))}.\label{sl2.5}
 \end{align}

 Now we set $\omega(r)=\|u\|_{C(B_r(x_0))}$. Then for any $r,t<\frac{\rho_0}{2}$, \eqref{sl2.5} implies
 \begin{align}
 \omega(r)\leq \frac{4A\|u\|_{L^1(B_1)}}{t^2}+C\|f\|^*_{L\ln L(B_1)}+\frac 18\omega(r+t).\label{sl2.6}
 \end{align}
 Set $t_k=(1-2^{-k})\rho_0$. Then we obtain the iteration
 $$\omega(t_k)\leq \frac{4^{k+2}A\|u\|_{L^1(B_1)}}{\rho_0^2}+C\|f\|^*_{L\ln L(B_1)}+\frac 18 \omega(t_{k+1}).$$
 By induction, we obtain
 $$\omega(t_1)\leq 8^{1-k}\omega(t_k)+\sum\limits^{k-1}_{i=1}8^{1-i}(\frac{4^{i+2}A\|u\|_{L^1(B_1)}}{\rho_0^2}+C\|f\|^*_{L\ln L(B_1)}).$$
 Letting $k\to \infty$, we get that
 \begin{align*}
 \omega(\frac{\rho_0}{2})=&\omega(t_1)
 \leq \lim_{k\to \infty}8^{1-k}\omega(\rho_0)+\sum\limits^{\infty}_{i=1}8^{1-i}(\frac{4^{i+2}A\|u\|_{L^1(B_1)}}{\rho_0^2}+C\|f\|^*_{L\ln L(B_1)})\\
 \leq &C(\frac{\|u\|_{L^1(B_1)}}{\rho_0^2}+\|f\|^*_{L\ln L(B_1)}).
 \end{align*}
 Note that $\rho_0$ depends only on $A,p$ and the function $g$. Due to the arbitrariness of $x_0$, we can yield that
 \begin{align*}
 \|u\|_{C(B_{\frac 12})}\leq\omega(\frac{\rho_0}{2})\leq C(\|u\|_{L^1(B_1)}+\|f\|^*_{L\ln L(B_1)}),
 \end{align*}
 where $C$ depends only on $A,p$ and the function $g$. This finishes the proof.

 \section{Proofs of Harnack inequalities}

 \subsection{Proof of Corollary \ref{main4}}

 For any $x_0\in B_{\frac 12},\rho<\frac 12$, computations similar to \eqref{sl2.3} and Lemma \ref{slnL} yield
  \begin{align*}
  |2\pi u(x_0)-\int_{S^1} u(x_0+\rho\theta)d\theta|\leq &\int_{B_{\rho}(x_0)}|f(x)|\ln \frac{\rho}{|x-x_0|}dx\\
  \leq &C\|f\|^*_{L\ln L(B_{\rho}(x_0))}\leq C\|f\|^*_{L\ln L(B_1)}.
  \end{align*}
 Integrating over $(0,r)$ for $r<\frac 12$, we obtain
 \begin{align*}
 &|\pi r^2u(x_0)-\int_{B_{r}(x_0)} u(x)dx|\\
 \leq &\int^r_0\rho|2\pi u(x_0)-\int_{S^1} u(x_0+\rho\theta)d\theta|d\rho\\
 \leq &C\int^r_0\rho \|f\|^*_{L\ln L(B_1)}d\rho\\
 \leq &Cr^2\|f\|^*_{L\ln L(B_1)}
 \end{align*}
 which implies that
 \begin{align*}
 |u(x_0)-\frac{\int_{B_{r}(x_0)} u(x)dx}{\pi r^2}|\leq C\|f\|^*_{L\ln L(B_1)}.
 \end{align*}
 For any $x_1,x_2\in B_{\frac 12}$ with $d(x_1,x_2)<\frac 14$, it is easy to see that $B_{\frac 14}(x_2)\subset B_{\frac 12}(x_1)$. So we get that
 \begin{align*}
 u(x_2)\leq & \frac{16}{\pi}\int_{B_{\frac 14}(x_2)} u(x)dx+C\|f\|^*_{L\ln L(B_1)}\\
 \leq &\frac{16}{\pi}\int_{B_{\frac 12}(x_1)} u(x)dx+C\|f\|^*_{L\ln L(B_1)}\\
 \leq &4(u(x_2)+C\|f\|^*_{L\ln L(B_1)})+C\|f\|^*_{L\ln L(B_1)}\\
 \leq &C(u(x_2)+\|f\|^*_{L\ln L(B_1)}).
 \end{align*}
 Thus, by iteration, for any $x,y\in B_{\frac 12}$, we have
 \begin{align*}
 u(x)\leq & C(u(y)+\|f\|^*_{L\ln L(B_1)}).
 \end{align*}
 This finishes the proof.

 \subsection{Proof of Corollary \ref{main5}}

 For any $x_0\in B_{\frac 12},\rho<\frac 12$, computations similar to \eqref{sl2.3} yield
 $$|2\pi u(x_0)-\int_{S^1} u(x_0+\rho\theta)d\theta|\leq |\int_{B_{\rho}(x_0)}f(x)\ln \frac{\rho}{|x-x_0|}dx|.$$
 Note that $\ln \frac{\rho}{|x-x_0|}\chi_{B_{\rho}(x_0)}$ is in $BMO$ and its $BMO$-norm is independent of $\rho$. So, by the $\mathscr{H}^1$-$\mathrm{BMO}$ duality, we get that
 $$|2\pi u(x_0)-\int_{S^1} u(x_0+\rho\theta)d\theta|\leq |\int_{B_{\rho}(x_0)}f(x)\ln \frac{\rho}{|x-x_0|}dx|\leq C\|f\|_{\mathscr{H}^1(B_1)}.$$
 Then, using the same proof as for Corollary \ref{main4}, for any $x,y\in B_{\frac 12}$ we get that
 \begin{align*}
 u(x)\leq C(u(y)+\|f\|_{\mathscr{H}^1(B_1)}).
 \end{align*}
 This finishes the proof.

 \section{Global estimate for Poisson equations on $M$}

 We extend $f$ to $B_2$ by setting $f=0$ in $B_2\setminus B_1$ and let $v\in H^1_0(B_2)$ be the solution of $\triangle v=f$ in $B_2$.

 We claim the following John-Nirenberg inequality:
 \begin{align}
 \int_{B_2}e^{\frac{|v(x)|}{2e\sqrt{A}\|\nabla v\|_{L^2(B_2)}}}dx\leq V(B_2).\label{sl5.1}
 \end{align}
 We use a proof similar to that in \cite{GT}. It is well known that $k^k<(2e)^kk!$ for any positive integer $k$. Then, by \eqref{sobolev1.2}, we obtain
  \begin{align*}
 \int_{B_2}e^{\frac{|v(x)|}{2e\sqrt{A}\|\nabla v\|_{L^2(B_2)}}}dx
 =&\int_{B_2}\sum^{\infty}_{k=1}\frac{|v(x)|^k}{(2e\sqrt{A}\|\nabla v\|_{L^2(B_2)})^kk!}dx\\
 =&\sum^{\infty}_{k=1}\frac{\|v\|^k_{L^k(B_2)}}{(2e\sqrt{A}\|\nabla v\|_{L^2(B_2)})^kk!}\\
 \leq &\sum^{\infty}_{k=1}(\frac{k}{2}\sqrt{A})^k\frac{V(B_2)\|\nabla v\|^k_{L^2(B_2)} }{(2e\sqrt{A}\|\nabla v\|_{L^2(B_2)})^kk!}\\
 =&\sum^{\infty}_{k=1}\frac{k^k}{(4e)^kk!}V(B_2)\\
 \leq &\sum^{\infty}_{k=1}2^{-k}V(B_2)=V(B_2).
 \end{align*}
 Thus, we have proved the claim \eqref{sl5.1}.

 Take a constant $C>2e\sqrt{A}$. For any $t>0$, from \eqref{sl5.1} we obtain
  \begin{align*}
 te^{\frac{v^*(t)}{C\|\nabla v\|_{L^2(B_2)}}}\leq &\int^{t}_{0}e^{\frac{v^*(s)}{C\|\nabla v\|_{L^2(B_2)}}}ds\\
 \leq &\int^{V(B_2)}_{0}e^{\frac{v^*(s)}{C\|\nabla v\|_{L^2(B_2)}}}ds\\
 =&\int_{B_2}e^{\frac{|v(x)|}{C\|\nabla v\|_{L^2(B_2)}}}dx\\
 \leq &V(B_2)
 \end{align*}
  which implies that
 \begin{align}
 v^*(t)\leq C\|\nabla v\|_{L^2(B_2)}\ln\frac{V(B_2)}{t}\label{sl5.2}
 \end{align}
 for any $0<t\leq V(B_2)$.

 By \cite[Theorem 4.15]{CR16} and \eqref{sl5.2}, we can obtain that
 \begin{align*}
 \int_{B_2}|v(x)f(x)|dV\leq &\int^{V(B_2)}_0f^*(t)v^*(t)dt\\
 \leq &C\|\nabla v\|_{L^2(B_2)}\int^{V(B_2)}_0f^*(t)\ln\frac{V(B_2)}{t}dt\\
 =&C\|\nabla v\|_{L^2(B_2)}\|f\|^*_{L\ln L(B_1)}.
 \end{align*}

 Now, from the assumptions that $v\in H^1_0(B_2)$ and $\triangle v=f$ in $B_2$, we get that
 \begin{align*}
 \|\nabla v\|^2_{L^2(B_2)}=&|\int_{B_2}v(x)\triangle v(x)dV|\\
 =&|\int_{B_2}v(x)f(x)dV|\\
 \leq &C\|\nabla v\|_{L^2(B_2)}\|f\|^*_{L\ln L(B_1)}
 \end{align*}
 which yields that
  \begin{align}
 \|\nabla v\|_{L^2(B_2)}\leq C\|f\|^*_{L\ln L(B_1)}.\label{sl5.3}
 \end{align}

 On the other hand, by \eqref{sobolev1.2}, Lemma \ref{up} and \eqref{sl5.3}, we get that
 \begin{align}
 \|v\|_{L^1(B_2)}\leq CV(B_2)\|\nabla v\|_{L^2(B_2)}\leq C\|f\|^*_{L\ln L(B_1)}.\label{sl5.4}
 \end{align}

 Though $f\in C^{\infty}(B_1)$, $f$ is not smooth in $B_2$ in general when we extend $f$ to $B_2$ by setting $f=0$ in $B_2\setminus B_1$. Therefore, we cannot apply Theorem 1.1 on $v$ directly. 
 Take $\eta_n\in C^{\infty}_{c}(B_1)$ with $0\leq \eta_n\leq 1, \eta_n\equiv 1$ on $B_{1-\frac 1n}$
 and let $v_n\in H^1_0(B_2)$ be the solution of $\triangle v_n=\eta_n f$ in $B_2$. It is easy to see that $\lim\limits_{n\to \infty}\|f-\eta_n f\|^*_{L\ln L(B_1)}=0$. 
 
 Clearly, all the above computations remain valid if we replace $v,f$ by $v_n,\eta_n f$. \eqref{sl5.4} implies
 $$\lim\limits_{n\to \infty} \|v-v_n\|_{L^1(B_2)}\leq C\lim\limits_{n\to \infty}\|f-\eta_n f\|^*_{L\ln L(B_1)}=0.$$

 As $\triangle (v_n-v_m)=(\eta_n-\eta_m)f$, by Theorem \ref{main1} and \eqref{sl5.4}, we have
 \begin{align*}
 \|v_n-v_m\|_{C(B_{\frac 32})}\leq &C(\|v_n-v_m\|_{L^1(B_2)}+\|(\eta_n-\eta_m)f\|^*_{L\ln L(B_1)})\\
 \leq &C\|(\eta_n-\eta_m)f\|^*_{L\ln L(B_1)}.
 \end{align*}

 Therefore, $\{v_n\}$ is a Cauchy sequence in $C(B_{\frac 32})$. As $v_n\to v$ in $L^1(B_2)$, we have $v_n\to v$ in $C(B_{\frac 32})$.
 Since $u-v$ is harmonic in $B_1$, by the maximum principle, Theorem \ref{main1},
 the assumption $u\in H^1_0(B_1)$ and \eqref{sl5.4} for $v_n$, we can show that
\begin{align*}
\|u\|_{C(B_1)}\leq &\|v\|_{C(B_1)}+\|u-v\|_{C(B_1)}\\
\leq &\|v\|_{C(B_1)}+\|u-v\|_{C(\partial B_1)}\\
\leq &\|v\|_{C(B_1)}+\|v\|_{C(\partial B_1)}\\
\leq &2\|v\|_{C(B_1)}\\
= &2\lim\limits_{n\to \infty}\|v_n\|_{C(B_1)}\\
\leq &C\lim\limits_{n\to \infty}(\|v_n\|_{L^1(B_2)}+\|\eta_nf\|^*_{L\ln L(B_1)})\\
\leq &C\lim\limits_{n\to \infty}\|\eta_nf\|^*_{L\ln L(B_1)}\\
\leq &C\|f\|^*_{L\ln L(B_1)}.
\end{align*}
 This finishes the proof.


\begin{thebibliography}{99}

\bibitem{CR16} Castillo R. E. and Rafeiro C. H., An Introductory Course in Lebesgue Spaces. \textbf{CMS Books in Mathematics.} Springer, [Cham], 2016, xii+461 pp.

	
\bibitem{CW2} Coifman R. R. and Weiss G., Extensions of Hardy spaces and their use in analysis. \textbf{Bull. Amer. Math. Soc.} 83 (1977), no. 4, 569-645.

\bibitem{GT} Gilbarg D. and Trudinger N. S., Elliptic partial differential equations of second order, \textbf{Springer}, 1998.

\bibitem{Helein} H\'elein F., Harmonic maps, conservation laws and moving frames, Second edition, \textbf{Cambridge university press}, 2002.


\bibitem{M1} Moser, J., On Harnack's theorem for elliptic differential equations. \textbf{Commun. Pure Appl. Math.} 14 (1961), 577-591.

\bibitem{S-Y} Schoen R. and Yau S. T., Lectures on differential geometry, \textbf{International Press, Cambridge}, MA, 1994.

\bibitem{Trudinger} Trudinger N. S., On Harnack type inequalities and their application to quasilinear elliptic equations. \textbf{Comm. Pure Appl. Math.} 20 (1967), 721-747.

\bibitem{S69} Stein E. M., Note on the class $L \log L$. \textbf{Studia Math}, 32 (1969), 305-310.

\bibitem{S93} Stein E. M., Harmonic analysis: real-variable methods, orthogonality, and oscillatory integrals. With the assistance of Timothy S. Murphy.
\textbf{Princeton Mathematical Series, 43. Monographs in Harmonic Analysis}, III. Princeton University Press, Princeton, NJ, 1993.







 \end{thebibliography}
\end{document}